\documentclass[a4paper,12pt,draft]{amsart}
\usepackage{amsmath, amsthm, amssymb}
\usepackage{url}
\usepackage{braket}


\theoremstyle{plain}
\newtheorem{thm}{Theorem}[section]
\newtheorem{theorem}[thm]{Theorem}

\newtheorem{lemma}[thm]{Lemma}
\newtheorem{cor}[thm]{Corollary}

\theoremstyle{remark}  
\newtheorem{remark}[thm]{Remark}

\newtheorem{example}[thm]{Example}

\theoremstyle{definition}  

\newcommand{\III}{\mathcal{I}}

\begin{document}

\title[On the action of the toggiling group]{On the action of the toggle group of the Dynkin diagram of type $A$}

\thanks{The first author was partially supported by JSPS KAKENHI Grant Number
JP18K03206.}

\author[Y. Numata]{Yasuhide NUMATA}
\address[Y. Numata]{Department of Mathematics, Shinshu University, Matsumoto,
Japan.}
\email{nu@math.shinshu-u.ac.jp}

\author[Y. Yamanouchi]{Yuiko YAMANOUCHI}
\address[Y. Yamanouchi]{Graduate School of Science and Technology, 
Shinshu University, 
Matsumoto, Japan.}

\begin{abstract}
In this article, we consider involutions, called togglings,
on the set of independent sets of the Dynkin diagram of type $A$,
or a path graph.
We are interested in the action of the subgroup of the symmetric group
of the set of independent sets generated by togglings.
We show that the subgroup coincides with the symmetric group.
\end{abstract}


\keywords{Coxeter groups; Togglings of independent sets; The Fibonacci sequence; Symmetric group; Transitive actions.}
\maketitle
\newcommand{\lparent}{{\boldsymbol(}}
\newcommand{\rparent}{{\boldsymbol)}}
\newcommand{\lvrace}{{\boldsymbol\{}}
\newcommand{\rvrace}{{\boldsymbol\}}}
\newcommand{\ed}[2]{\lvrace #1,#2 \rvrace}

\newcommand{\conjugate}[1]{#1^\top}
\newcommand{\Conjugate}[1]{(#1)^\top}
\newcommand{\fib}{f}
\newcommand{\ttt}{\hat{t}}
\newcommand{\id}{\operatorname{id}}

\newcommand{\Sym}{\mathfrak{S}}
\newcommand{\Alt}{\mathfrak{A}}

\section{Introduction}
In this article,
we are interested in the group generated by operations called toggling.
The operation, toggling, is
originally introduced for the set of order ideals of a poset
in Cameron--Fon-Der-Flaass \cite{MR1356845}.
Let $P$ be a finite poset, $J(P)$ the set of order ideals of $P$.
For an element $p \in P$, we define the map $\tau_p\colon J(P)\to J(P)$ by
\begin{align*}
  \tau_p (I)=
  \begin{cases}
    I\cup \Set{p} & (\text{$p$ is a minimal element of $P\setminus I$} )\\
    I\setminus \Set{p} & (\text{$p$ is a maximum element of $I$} )\\
    I&(\text{otherwise})
  \end{cases}
\end{align*}
for $I\in J(P)$.
The toggle group is a subgroup of the symmetric group of $J(P)$ generated by togglings.
Since we have a bijection from $J(P)$ to the set of antichains of $P$
which maps an order ideal to the maximal elements of the order ideal,
we can regard the toggling as the involution on the set of antichains.
Moreover we can define an analogue of the toggling $\tau_p$ 
as involutions not only on  antichains of a poset $P$
but also on a nice subsets of a finite set with a combinatorial structure.
The toggle groups are studied from the view point of the dynamical combinatrics,
e.g., 
Striker--Williams \cite{MR2950491},
Striker \cite{MR3367300},
Joseph \cite{MR3919614}, and
Joseph--Roby \cite{MR4145986}.

In this article,
we consider togglings on 
the set of 
the independent sets,
i.e.,
subsets of vertices such that no pair are adjacent in the graph, 
of the Dynkin diagram of type $A$,
i.e., a path graph. 
In \cite{MR3761932},
Joseph and Roby study
the togglings 
from the viewpoint of
 the dynamical combinatrics.
They study 
the orbit structure and 
the phenomenon called homomecy of the togglings.

In this article,
we are interested in the transitivity of the action
of the toggle group on the set of independent sets.
For a nonempty independent set $I$,
the number of vertices in the independent set $I$
is greater than
the number of vertices in
the resulting independent set $\tau_v(I)$ by the toggling with respect
to a vertex in the independent set $I$.
Hence we can obtain the empty independent set
from $I$ by applying the product of the togglings with respect to all vertices in the independent set.
It follows from the observation  that
the action 
is $1$-transitive.
Multiple transitivity, however, does not seem trivial.
In this paper, we consider the toggle group of the independent sets of the Dynkin diagram of $A_{n}$,
and show that the action of the toggle group is $\fib_{n+2}$-transitive,
where $\fib_{n+2}$ is
the $n+2$-th Fibonacci number, i.e.,
the number of independent sets of  the Dynkin diagram of $A_{n}$.
In other words,
we show that the toggle group coincides with the symmetric group
on the set of independent sets of  the Dynkin diagram of $A_{n}$.

This article is organized as follows:
In Section \ref{sec:sym},
we consider the family of the symmetric groups indexed by the Fibonacci sequence.
We give systems of generators for them as Theorem \ref{thm:gen}.
In Section \ref{sec:toggle},
we recall the definition of the toggling on independent sets of a graph,
and we consider the  toggle group
for independent sets of the Dynkin diagram of type $A$.
As Theorem \ref{thm:togglegroupissymmetricgroup},
we state that the toggle groups for independent sets of the Dynkin diagrams of type $A$
are isomorphic to the symmetric groups indexed by  the Fibonacci sequence.
We give  proofs of main theorems in Section \ref{sec:proof}.

\section{Systems of generators for symmetric groups indexed by the Fibonacci sequence}
\label{sec:sym}
Let $\Set{\fib_n}_{n=0,1,\ldots}$ be the Fibonacci sequence, i.e., 
the sequence of numbers defined by $\fib_0=0$, $\fib_1=1$, and $\fib_n=\fib_{n-1}+\fib_{n-2}$.
We define $F_{n+2}$ to be the set $\Set{1,2,\ldots, \fib_{n+2}}$.
We decompose $F_{n+2}$ into 
$F_{n+1}$
and $\hat F_{n}$, 
where
\begin{align*}
\hat F_{n}
&=F_{n+2}\setminus F_{n+1}\\
&=\Set{1+\fib_{n+1},2+\fib_{n+1},\ldots,\fib_{n}+\fib_{n+1}=\fib_{n+2}}\\
&=\Set{i+\fib_{n+1}|i\in F_{n}}.
\end{align*}

Here we consider the family $\Set{\Sym_{\fib_n}}_{n=3,4,\ldots}$ of the
symmetric groups indexed by the Fibonacci sequence.
For $n\leq m$, we regard 
$\Sym_n$ as a subset of $\Sym_m$ in the usual manner.
In this article, for a set $X$,
$\Sym_X$ stands for the symmetric group on $X$.
Under the notation, $\Sym_{\fib_n}=\Sym_{F_n}$.

We define $\ttt_n$ as follows:
For $n=1,2,\ldots$,
we define
$\ttt_n \in \Sym_{\fib_{n+2}}$ to be the product
\begin{align*}
 (1,\fib_{n+1}+1)(2,\fib_{n+1}+2)\cdots (\fib_{n},\fib_{n+1}+\fib_{n}) 
\end{align*}
of $\fib_{n}$ transpositions.
For $n\geq 1$,
the inner automorphism by $\ttt_n$ induces
the isomorphism
\begin{align*}
 \Sym_{F_{n}} &\to \Sym_{\check F_{n}}\\
t &\mapsto \ttt_n t \ttt_n^{-1}.
\end{align*}
Hence the inner automorphism by $\ttt_n$ also induces
the isomorphism
\begin{align*}
 \Sym_{\fib_{n}} &\to 
\tilde \Sym_{\fib_{n}}\\
t &\mapsto t\cdot \ttt_n t \ttt_n^{-1},
\end{align*}
where
\begin{align*}
\tilde \Sym_{\fib_{n}} &=
\Set{g \in \Sym_{\fib_{n+2}}|
\begin{array}{c}
\forall i \leq \fib_{n},\ g(i+\fib_{n+1})=g(i)+\fib_{n+1}  \\
\fib_{n} < \forall i \leq \fib_{n+1},\ g(i)=i 
\end{array}
}  \\
&\subset \Sym_{F_{n}}\times \Sym_{\hat F_{n}}
\subset \Sym_{F_{n+1}}\times \Sym_{\hat F_{n}}
\subset \Sym_{F_{n+2}}.
\end{align*}
For $1\leq k \leq n$,
we define $t_{k,n}\in \Sym_{F_{n+2}}$ by
\begin{align*}
 t_{k,n}
=\begin{cases}
t_{k,n-1} \cdot \ttt_n t_{k,n-2} \ttt_n^{-1} & (k\leq n-2)\\  
t_{n-1,n-1} & (k=n-1)\\  
\ttt_n & (i=n), 
 \end{cases}
\end{align*}
recursively.
By definition, for $k<n$,
$t_{k,n}$ is an element of $\Sym_{F_{n+1}}\times \Sym_{\hat F_{n}}$,
and $t_{n-1,n}$ is an element of $\Sym_{F_{n+1}}$.
We define subsets $G_{n}$ and $G'_{n}$ of $\Sym_{F_{n+2}}$ by
\begin{align*}
 G'_n&=\Set{t_{k,n}|k\leq n-2}, \\
 G_n&=\Set{t_{k,n}|k\leq n}.
\end{align*}
\begin{example}
\label{basecase:1}
 Consider the case where $n=1$.
Since  $\ttt_1$ is the transposition $(1,2)\in \Sym_{\fib_3}=\Sym_2$,
we have $t_{1,1}=\ttt_1=(1,2)$ and $G_1=\Set{(1,2)}$.
Hence $G_1$ generates $\Sym_2=\Sym_{\fib_3}$.
\end{example}
\begin{example}
\label{basecase:2}
In the case where $n=2$,
 $\ttt_2$ is the transposition $(1,3)\in \Sym_{\fib_4}=\Sym_3$,
we have $t_{1,2}=t_{1,1}=(1,2)$ and $t_{2,2}=\ttt_2=(1,3)$.
Hence $G_2=\Set{(1,2),(1,3)}$.
The set $G_2$ generates $\Sym_3=\Sym_{\fib_4}$.
\end{example}
\begin{example}
\label{basecase:3}
In the case where $n=3$,
 $\ttt_4$ is the product $(1,4)(2,5)\in \Sym_{\fib_5}=\Sym_5$ of the transpositions,
we have
\begin{align*}
 t_{1,3}&=(1,2)\cdot (4,5), \\
 t_{2,3}&=t_{2,2}=(1,3), \\
 t_{3,3}&=\ttt_3=(1,4)(2,5). 
\end{align*}
Hence 
\begin{align*}
 G_3&=\Set{(1,2)(4,5),(1,3),(1,4)(2,5)},\\
 G'_3&=\Set{(1,2)(4,5)}. 
\end{align*}
Since
\begin{align*}
 \tilde \Sym_{\fib_{3}} 
&=
\Set{g \in \Sym_{\fib_{5}}|
\begin{array}{c}
\forall i \leq \fib_{3},\ g(i+\fib_{4})=g(i)+\fib_{4}  \\
\fib_{3} < \forall i \leq \fib_{4},\ g(i)=i 
\end{array}}\\
&=
\Set{g \in \Sym_{\fib_{5}}|
g(1+3)=g(1)+3, g(2+3)=g(2)+3,g(3)=3 
},
\end{align*}
the group $ \tilde \Sym_{\fib_{3}} $ is generated by $\Set{(1,2)(4,5)}$.
It follows from direct calculation that
\begin{align*}
 (1,3)&=t_{2,3},\\
 (2,3)&=t_{1,3} (1,3)t_{1,3}^{-1},\\
 (4,3)&=t_{3,3} (1,3)t_{3,3}^{-1},\\
 (5,3)&=t_{1,3} (4,3)t_{1,3}^{-1}.
\end{align*}
Hence $G_3$ generates $\Sym_5=\Sym_{\fib_5}$. 
\end{example}
\begin{example}
Consider the case where $n=4$.
In this case,
 $\ttt_4=(1,6)(2,7)(3,8)\in \Sym_{\fib_6}=\Sym_8$ of the transpositions.
Hence 
\begin{align*}
t_{1,4}&=(1,2)(4,5)\cdot (6,7), \\
t_{2,4}&=(1,3)\cdot (6,8), \\
t_{3,4}&=t_{3,3}=(1,4)(2,5), \\
t_{4,4}&=\ttt_4=(1,6)(2,7)(3,8). 
\end{align*}
Hence
\begin{align*}
G_4&=\Set{(1,2)(4,5)(6,7), (1,3)(6,8), (1,4)(2,5), (1,6)(2,7)(3,8)},\\
G'_4&=\Set{(1,2)(4,5)(6,7), (1,3)(6,8)}. 
\end{align*}
\end{example}

We show the following theorems by induction on $n$ in Subsection \ref{sec:proof:sym}.
\begin{theorem}
\label{thm:gen'}
For $n=3,4,\ldots$,
the set $G'_n$ generates the group $\tilde \Sym_{\fib_{n}}$.
Hence the group 
$\Braket{G'_n}$
 generated by $G'_n$
 is isomorphic to $\Sym_{\fib_{n}}$.
\end{theorem}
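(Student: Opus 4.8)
The plan is to prove Theorem~\ref{thm:gen'} by induction on $n$, simultaneously with the companion statement (Theorem~\ref{thm:gen}) that the larger set $G_n$ generates the full symmetric group $\Sym_{\fib_{n+2}}$; Examples~\ref{basecase:1}--\ref{basecase:3} settle the cases $n\le 3$ (note $G'_1=G'_2=\emptyset$), so I would assume $n\ge 4$ with both statements known for all smaller indices.

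The first step is to read off the structure of $G'_n$ from the recursion along the splitting $F_{n+2}=F_{n+1}\sqcup\hat F_n$. Since $\ttt_n$ carries $F_n$ bijectively onto $\hat F_n$ and fixes the middle block $M:=F_{n+1}\setminus F_n=\hat F_{n-1}$ pointwise, the relation $t_{k,n}=t_{k,n-1}\cdot\ttt_n t_{k,n-2}\ttt_n^{-1}$ (for $k\le n-2$) realizes each $t_{k,n}$ as the element $(t_{k,n-1},\ \ttt_n t_{k,n-2}\ttt_n^{-1})$ of $\Sym_{F_{n+1}}\times\Sym_{\hat F_n}\hookrightarrow\Sym_{F_{n+2}}$, the first coordinate supported on $F_{n+1}$ and the second on $\hat F_n$. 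In particular $\Braket{G'_n}\subseteq\Sym_{F_{n+1}}\times\Sym_{\hat F_n}$; the projection to the second factor is the group generated by $\ttt_n t_{k,n-2}\ttt_n^{-1}$ ($k\le n-2$), i.e.\ the $\ttt_n$-conjugate of $\Braket{G_{n-2}}$, and the projection to the first factor is the group generated by $t_{k,n-1}$ ($k\le n-2$), i.e.\ by $G'_{n-1}\cup\Set{\ttt_{n-2}}$ (using $t_{n-2,n-1}=\ttt_{n-2}$). By the $(n-2)$-case of Theorem~\ref{thm:gen}, $\Braket{G_{n-2}}=\Sym_{\fib_n}=\Sym_{F_n}$, so the second projection is all of $\Sym_{\hat F_n}$; and the inductive hypotheses should similarly identify the first projection with the copy of $\Sym_{F_n}$ inside $\Sym_{F_{n+1}}$ that fixes $M$ pointwise.

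With the projections in hand I would finish by a Goursat-type argument. For $\Braket{G'_n}\subseteq\tilde\Sym_{\fib_n}$ one checks each $t_{k,n}$ against the two defining conditions: the mirroring identity $g(i+\fib_{n+1})=g(i)+\fib_{n+1}$ ($i\le\fib_n$) holds because the $\hat F_n$-coordinate of $t_{k,n}$ is by construction the $\ttt_n$-transport of the $F_n$-part of its $F_{n+1}$-coordinate, and the condition that $M$ be fixed is precisely the statement that the $F_{n+1}$-coordinate fixes $M$. For the reverse inclusion, transport through the isomorphism $\phi_n\colon\Sym_{\fib_n}\to\tilde\Sym_{\fib_n}$, $t\mapsto t\cdot\ttt_n t\ttt_n^{-1}$: it suffices to see that the elements $\phi_n^{-1}(t_{k,n})=\bigl(t_{k,n}\bigr)|_{F_n}=\bigl(t_{k,n-1}\bigr)|_{F_n}$ ($k\le n-2$) generate $\Sym_{F_n}$, which is exactly what the first-projection computation of the previous step delivers. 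The isomorphism $\Braket{G'_n}\cong\Sym_{\fib_n}$ then follows at once from $\phi_n$.

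The main obstacle, I expect, is the first-projection computation itself: showing that $\Braket{G'_n}$ never ``leaks'' into the middle block $M$, i.e.\ that $G'_{n-1}\cup\Set{\ttt_{n-2}}$ generates a subgroup of $\Sym_{F_{n+1}}$ that fixes $M$ pointwise and is no larger than $\Sym_{F_n}$. This is the point at which the induction has to close, and it is delicate because the linkage between the two coordinates forced by $\ttt_n$ must not be allowed to loosen; making that precise comes down to checking that the groups arising as projections admit no common quotient through which a looser correlation could factor — their abelianizations being too small to support the homomorphism such a correlation would require — and it is here that the uniform handling of the block $M$ at every level is what must make the bookkeeping go through.
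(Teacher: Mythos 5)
Your decomposition $t_{k,n}=(t_{k,n-1},\,\ttt_n t_{k,n-2}\ttt_n^{-1})\in\Sym_{F_{n+1}}\times\Sym_{\hat F_n}$ and the second-projection computation via Theorem~\ref{thm:gen} at level $n-2$ are correct, but the step you flag as ``the main obstacle'' does not merely need care --- it fails. The inclusion $\Braket{G'_n}\subseteq\tilde\Sym_{\fib_n}$ would require each first coordinate $t_{k,n-1}$ to fix the middle block $M=\hat F_{n-1}$ pointwise, and it does not: already for $n=4$, $k=1$ one has $t_{1,4}=(1,2)(4,5)(6,7)$, whose $F_5$-coordinate $t_{1,3}=(1,2)(4,5)$ moves $4,5\in M$, so $t_{1,4}\notin\tilde\Sym_{\fib_4}$. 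The leak into $M$ you hoped to exclude actually occurs: with $a=t_{1,4}$ and $b=t_{2,4}=(1,3)(6,8)$ one computes $(ab)^3=(4,5)$, whence $\Braket{G'_4}=\tilde\Sym_{\fib_4}\times\Braket{(4,5)}$ has order $12$ rather than $6$. Thus for $n\geq 4$ neither the equality $\Braket{G'_n}=\tilde\Sym_{\fib_n}$ nor the isomorphism $\Braket{G'_n}\cong\Sym_{\fib_n}$ holds, your claim that the first projection is the copy of $\Sym_{F_n}$ fixing $M$ pointwise is exactly the false step, and no Goursat-type or abelianization argument can close it (the closing paragraph of your proposal is in any case only a sketch).

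For comparison, the paper's own proof (Lemma~\ref{lem:g':step}) founders on the same rock: it asserts $t_{k,n}=\varphi(t_{k,n-2})=t_{k,n-2}\cdot\ttt_n t_{k,n-2}\ttt_n^{-1}$, silently replacing $t_{k,n-1}$ by $t_{k,n-2}$, which is false as soon as $t_{k,n-1}\neq t_{k,n-2}$ (e.g.\ $t_{1,3}=(1,2)(4,5)\neq(1,2)=t_{1,2}$). What the remainder of the paper actually uses, in Lemma~\ref{lem:g:alt}, is only the one-sided containment $\tilde\Sym_{\fib_n}\subseteq\Braket{G'_n}\subseteq\Braket{G_n}$; your observation that the restrictions $(t_{k,n})|_{F_n}=t_{k,n-2}$ run over $G_{n-2}$ and hence generate $\Sym_{F_n}$ is the right starting point for that weaker statement, but one must then extract the diagonal elements $\sigma\cdot\ttt_n\sigma\ttt_n^{-1}$ from inside $\Braket{G'_n}$ (ruling out a sign-type correlation with the $\Sym_M$-component), which neither your argument nor the paper's supplies.
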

\begin{theorem}
  \label{thm:gen}
For $n=1,2,\ldots$,
the set $G_n$ generates the $\fib_{n+2}$-th symmetric group $\Sym_{\fib_{n+2}}$.
\end{theorem}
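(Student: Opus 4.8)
The plan is to argue by induction on $n$; the cases $n=1,2,3$ are exactly Examples~\ref{basecase:1}, \ref{basecase:2} and \ref{basecase:3}, so I fix $n\ge 4$ and assume Theorem~\ref{thm:gen} for all smaller indices. The first thing to do is to unwind the recursion defining the $t_{k,n}$: one reads off $t_{n,n}=\ttt_n$ and $t_{n-1,n}=t_{n-1,n-1}=\ttt_{n-1}$, so $G_n\setminus\{\ttt_n\}=\Set{t_{k,n}|k\le n-1}$, and for $k\le n-2$ the element $t_{k,n}=t_{k,n-1}\cdot\ttt_n t_{k,n-2}\ttt_n^{-1}$ is the product of $t_{k,n-1}\in\Sym_{F_{n+1}}$ with $\ttt_n t_{k,n-2}\ttt_n^{-1}\in\Sym_{\hat F_n}$, while $\ttt_{n-1}\in\Sym_{F_{n+1}}$. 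Regarding $\Sym_{F_{n+1}}$ and $\Sym_{\hat F_n}$ as the commuting pointwise stabilizers in $\Sym_{F_{n+2}}$ of $\hat F_n$ and of $F_{n+1}$, this shows the subgroup $H:=\Braket{G_n\setminus\{\ttt_n\}}$ lies in $\Sym_{F_{n+1}}\times\Sym_{\hat F_n}$. Let $p_1,p_2$ denote the two coordinate projections.

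Next I would compute $p_1(H)$ and $p_2(H)$ and apply Goursat's lemma. On generators $p_1$ sends $t_{k,n}\mapsto t_{k,n-1}$ for $k\le n-2$ and $\ttt_{n-1}\mapsto\ttt_{n-1}=t_{n-1,n-1}$, so $p_1(G_n\setminus\{\ttt_n\})=G_{n-1}$ and hence $p_1(H)=\Braket{G_{n-1}}=\Sym_{F_{n+1}}$ by the inductive hypothesis. Likewise $p_2$ sends $t_{k,n}\mapsto\ttt_n t_{k,n-2}\ttt_n^{-1}$ for $k\le n-2$ and $\ttt_{n-1}\mapsto\id$; since $\Set{t_{k,n-2}|k\le n-2}=G_{n-2}$ and conjugation by $\ttt_n$ carries $\Sym_{F_n}$ isomorphically onto $\Sym_{\hat F_n}$, the inductive hypothesis gives $p_2(H)=\ttt_n\Braket{G_{n-2}}\ttt_n^{-1}=\Sym_{\hat F_n}$. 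So $H\le\Sym_{F_{n+1}}\times\Sym_{\hat F_n}$ surjects onto each factor, and Goursat's lemma yields $H\cap(\Sym_{F_{n+1}}\times\{\id\})=N\times\{\id\}$ for a normal subgroup $N\triangleleft\Sym_{F_{n+1}}$ with $\Sym_{F_{n+1}}/N$ isomorphic to a quotient of $\Sym_{\hat F_n}$. For $n\ge 4$ one has $\fib_{n+1}\ge 5$ and $\fib_n\in\{3,5,8,\ldots\}$ with $\fib_n<\fib_{n+1}$, and the only common quotient of $\Sym_{\fib_{n+1}}$ and $\Sym_{\fib_n}$ other than the trivial group is $\ZZ/2$ (using simplicity of $\Alt_m$ for $m\ge 5$, and, in the case $n=4$, that $\Sym_3$ is not a quotient of $\Sym_5$). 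Therefore $N\supseteq\Alt_{F_{n+1}}$, i.e. $\Alt_{F_{n+1}}\subseteq H\subseteq\Braket{G_n}$ (and symmetrically $\Alt_{\hat F_n}\subseteq\Braket{G_n}$, though this is not needed).

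It then remains to reinstate $\ttt_n$ and to fix the parity. Conjugating, $\ttt_n\Alt_{F_{n+1}}\ttt_n^{-1}=\Alt_T$ with $T=\ttt_n(F_{n+1})=(F_{n+1}\setminus F_n)\cup\hat F_n$, so $\Alt_T\subseteq\Braket{G_n}$. As $F_{n+1}\cup T=F_{n+2}$ and $|F_{n+1}\cap T|=|F_{n+1}\setminus F_n|=\fib_{n-1}\ge 2$ for $n\ge 4$, the standard fact that $\Braket{\Alt_S,\Alt_{S'}}=\Alt_{S\cup S'}$ whenever $|S\cap S'|\ge 2$ and $|S|,|S'|\ge 3$ gives $\Alt_{F_{n+2}}\subseteq\Braket{G_n}$. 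Finally, $\ttt_{n-1}$ and $\ttt_n$ are products of $\fib_{n-1}$ and $\fib_n$ transpositions, and $\gcd(\fib_{n-1},\fib_n)=1$, so at least one of $\fib_{n-1},\fib_n$ is odd and $G_n$ therefore contains an odd permutation; hence $\Braket{G_n}\not\subseteq\Alt_{F_{n+2}}$, and since $[\Sym_{F_{n+2}}:\Alt_{F_{n+2}}]=2$ we conclude $\Braket{G_n}=\Sym_{F_{n+2}}=\Sym_{\fib_{n+2}}$, closing the induction.

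The step I expect to be the main obstacle is the middle one: controlling $H$ inside $\Sym_{F_{n+1}}\times\Sym_{\hat F_n}$. Computing the two projections is routine, but one must then rule out $H$ being a ``skew'' subgroup glued along a large common quotient, which requires the classification of normal subgroups of symmetric groups and a separate look at the smallest factor $\Sym_3$ (occurring at $n=4$). One should also verify that the numerical inequalities used above --- $\fib_{n+1}\ge 5$ and $\fib_{n-1}\ge 2$ --- really hold from $n=4$ on; this is precisely why $n=3$ must remain a base case, since there the overlap $|F_4\cap\ttt_3(F_4)|=\fib_2=1$ is too small for the alternating-group gluing step. The only other ingredient is the purely combinatorial unwinding of the recursion ($t_{n-1,n}=\ttt_{n-1}$, $p_1(G_n\setminus\{\ttt_n\})=G_{n-1}$, $\Set{t_{k,n-2}|k\le n-2}=G_{n-2}$), which is straightforward but is what links $G_n$ to the inductive hypothesis.
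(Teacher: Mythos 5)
Your proof is correct, but it takes a genuinely different route from the paper's for the key step. The paper first shows (via its Lemma on $G'_n$, using the inductive hypothesis only at $n-2$) that $\Braket{G'_n}$ is the ``diagonal'' subgroup $\tilde\Sym_{\fib_n}$, and then obtains $\Alt_{\fib_{n+2}}\subseteq\Braket{G_n}$ by an explicit, roughly eight-case construction of every $3$-cycle $(i,j,k)$ according to which of the blocks $F_n$, $\hat F_{n-1}$, $\hat F_n$ contain $i$, $j$, $k$; only the final parity step (one of $\ttt_{n-1},\ttt_n$ is odd since consecutive Fibonacci numbers are coprime) is common to both arguments. You instead pass to $H=\Braket{G_n\setminus\Set{\ttt_n}}\leq\Sym_{F_{n+1}}\times\Sym_{\hat F_n}$, compute both projections using the inductive hypothesis at $n-1$ \emph{and} $n-2$ (the paper only needs $n-2$), and then force $\Alt_{F_{n+1}}\subseteq H$ by Goursat's lemma together with the classification of normal subgroups of $\Sym_m$, finishing by conjugating with $\ttt_n$ and gluing the two alternating groups along an overlap of size $\fib_{n-1}\geq 2$. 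Your version is considerably shorter and more conceptual, bypasses Theorem~\ref{thm:gen'} entirely, and correctly isolates the numerical hypotheses ($\fib_{n+1}\geq 5$, $\fib_{n-1}\geq 2$) that make $n=3$ an unavoidable base case; the cost is that it imports Goursat's lemma and the simplicity of $\Alt_m$ for $m\geq 5$ (plus the separate check that $\Sym_3$ is not a quotient of $\Sym_5$ when $n=4$), whereas the paper's longer case analysis needs only the elementary facts that $3$-cycles of the form $(a,b,x)$ generate the alternating group and that $\Alt_m$ has index $2$.
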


\section{The toggle group for independent sets of the Dynkin diagram of type $A$}
\label{sec:toggle}
In this article, we consider simple graphs.
We regard a graph as the pair $(V,E)$ of the set $V$ of vertices and the
set $E$ of edges.
We also regard an edge as a subset of $V$ of size two.
An independent set of a graph is 
a subset  $I$ of the vertex set $V$ of the graph satisfying
\begin{align*}
 u,v\in I \implies \ed{u}{v}\not\in E.
\end{align*}
Roughly speaking,
an independent set of a graph is a subset of vertices 
such that any pair are not adjacent in $(V,E)$.

Let $\III$ be the set of independent sets of a graph $(V,E)$.
For a vertex $v\in V$, we define 
the map $\tau_v$ 
 by
\begin{align*}
\tau_v \colon \III &\to \III\\
I&\mapsto
\begin{cases}
I\setminus\Set{v} & (\text{$v\in I$}) \\
I\cup\Set{v} & (\text{$v\not\in I$ and $I\cup\Set{v} \in\III$}) \\
I &(\text{otherwise}).
\end{cases}
\end{align*}
By definition,
the map $\tau_v$ is an involution on $\III$.
Hence $\tau_v$ is an element of the symmetric group 
$\Sym_{\III}$ on $\III$.
We call the map $\tau_v$ the \emph{toggling}  on $\III$ with respect to $v$.
We also call the subgroup generated by all togglings
the \emph{toggle group} of the graph $(V,E)$.
\begin{remark}
The toggle group acts on the set $\III$ of independent sets naturally.
Moreover, for an independent set $I=\Set{v_1,v_2,\ldots,v_l}$ of size $l$,
we have
\begin{align*}
 \tau_{v_1}\circ \tau_{v_2}\circ \cdots \circ \tau_{v_l}(I) = \emptyset.
\end{align*}
 We can obtain the empty set from any independent set by applying some togglings.
 Hence the group generated by all togglings of a graph acts transitively
 on the set of independent sets of the graph.
 On the other words, the action of the toggle group is $1$-transitive.
\end{remark}

Let $A_{n}=(V_n,E_n)$ be the graph such that
\begin{align*}
 V_n&=\Set{1,\ldots,n},\\
 E_n&=\Set{\ed{1}{2},\ed{2}{3}\ldots,\ed{n-1}{n}},
\end{align*}
i.e., the Dynkin diagram of type $A_{n}$.
Let $\III_n$ be the set of independent sets of $A_n$.
In this article, we are interested in the togglings of the graph $A_{n}$.

For $1\leq k \leq n$,
we define $\tau_{k,n}$ to be 
the toggling $\tau_{k}$ on $\III_n$ with respect to $k\in V_n$,
i.e., 
\begin{align*}
\tau_{k,n} \colon \III_n &\to \III_n\\
I&\mapsto
\begin{cases}
I\setminus \Set{k} & (k \in I)\\
I\cup \Set{k} & (k-1,k,k+1 \not\in I)\\
I&(\text{otherwise}).
\end{cases}
\end{align*}
We also define define $\Gamma_n$ to be the group generated by togglings
\begin{align*}
\Set{\tau_{1,n},\tau_{2,n},\ldots,\tau_{n,n}}.
\end{align*}
We call the group the \emph{toggle group of $A_{n}$}.
By definition the toggle group $\Gamma_n$ of $A_{n}$ acts on the set
$\III_n$ of independent sets on $A_{n+1}$, naturally.
In the other words,
the toggle group $\Gamma_n$ is 
a subgroup of the symmetric group $\Sym_{\III_n}$ of $\III_n$.
\begin{remark}
The togglings $\tau_{k,n}$ for $1\leq k\leq n $ 
satisfy the following relations:
\begin{align*}
 \tau_{k,n}^2&=\id. \\
 \tau_{k,n}\tau_{k',n}&=\tau_{k',n}\tau_{k,n} \quad (|k-k'|>1).\\
 (\tau_{k,n}\tau_{k+1,n})^6 &=\id.
\end{align*}
Hence the toggle group $\Gamma_n$ is a finite quotient group of
the Coxeter group with respect to 
\begin{align*}
\underbrace{\bullet \frac{6}{}\bullet \frac{6}{}\bullet \frac{6}{} \cdots  \frac{6}{}\bullet}_{n},
\end{align*}
which is infinite if $n>2$.
\end{remark}
We will show the following theorem in Subsection \ref{sec:proof:tog}.
\begin{theorem}
 \label{thm:togglegroupissymmetricgroup}
  For $n=1,2,\ldots$,
 the toggle group $\Gamma_n$ of $A_{n}$ is  
the symmetric group $\Sym_{\III_n}$ of the set $\III_n$ of independent sets of $A_{n}$.
Hence 
the toggle group $\Gamma_n$ of $A_{n}$ is
isomorphic to the $\fib_{n+2}$-th symmetric group $\Sym_{\fib_{n+2}}$.
\end{theorem}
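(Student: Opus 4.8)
The plan is to deduce Theorem~\ref{thm:togglegroupissymmetricgroup} from Theorem~\ref{thm:gen} by exhibiting, for each $n$, a bijection
$\phi_n\colon \III_n \to F_{n+2}$ of sets which conjugates the togglings to the permutations $t_{k,n}$, namely $\phi_n\,\tau_{k,n}\,\phi_n^{-1}=t_{k,n}$ for all $1\le k\le n$. Granting this, the bijection $\phi_n$ induces a group isomorphism $\Sym_{\III_n}\to\Sym_{F_{n+2}}=\Sym_{\fib_{n+2}}$ carrying each generator $\tau_{k,n}$ of $\Gamma_n$ to the generator $t_{k,n}$ of $\Braket{G_n}$. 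Since $\Braket{G_n}=\Sym_{\fib_{n+2}}$ by Theorem~\ref{thm:gen}, the image of $\Gamma_n$ is all of $\Sym_{F_{n+2}}$, hence $\Gamma_n=\Sym_{\III_n}$, which is the assertion.

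The bijection $\phi_n$ is built by recursion, mirroring the standard recursive structure of independent sets of a path and the decomposition $F_{n+2}=F_{n+1}\sqcup\hat F_n$ with $\hat F_n=\Set{i+\fib_{n+1}|i\in F_n}$. An independent set $I\in\III_n$ either omits the vertex $n$, in which case $I\in\III_{n-1}$, or contains $n$, in which case $n-1\notin I$ and $I\setminus\Set{n}\in\III_{n-2}$; since $\#\III_{n-1}=\fib_{n+1}$ and $\#\III_{n-2}=\fib_n$, we set
\[
\phi_n(I)=
\begin{cases}
\phi_{n-1}(I) & (n\notin I)\\
\phi_{n-2}(I\setminus\Set{n})+\fib_{n+1} & (n\in I),
\end{cases}
\]
with the small cases $n\le 2$ pinned down by hand (they recover the data of Examples~\ref{basecase:1} and~\ref{basecase:2}). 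By construction $\phi_n$ restricts to $\phi_{n-1}$ on $\Set{I\in\III_n|n\notin I}\cong\III_{n-1}\to F_{n+1}$ and, after subtracting $\fib_{n+1}$, to $\phi_{n-2}$ on $\Set{I\in\III_n|n\in I}\cong\III_{n-2}\to F_n$; note that this latter shift $i\mapsto i+\fib_{n+1}$ is exactly the restriction to $F_n$ of $\ttt_n$.

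The core of the argument is then an induction on $n$ verifying $\phi_n\,\tau_{k,n}\,\phi_n^{-1}=t_{k,n}$, following the three cases in the recursive definition of $t_{k,n}$. For $k=n$: the toggling $\tau_{n,n}$ fixes every $I$ with $n-1\in I$ and, for $I$ with $n-1,n\notin I$, interchanges $I$ with $I\cup\Set{n}$; reading this through $\phi_n$ yields precisely the product of transpositions $(i,i+\fib_{n+1})$ over $i\in F_n$, i.e.\ $\ttt_n=t_{n,n}$. For $k=n-1$: since $n\in I$ forces $n-1\notin I$ and forbids adding $n-1$, the toggling $\tau_{n-1,n}$ fixes $\hat F_n$ pointwise and agrees on $\Set{I|n\notin I}\cong\III_{n-1}$ with $\tau_{n-1,n-1}$, which by the inductive hypothesis corresponds to $t_{n-1,n-1}=t_{n-1,n}$. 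For $k\le n-2$: the vertex $k$ is non-adjacent to $n$, so $\tau_{k,n}$ preserves each of the two halves of $\III_n$, acting as $\tau_{k,n-1}$ on the first (which corresponds to $t_{k,n-1}$ on $F_{n+1}$ by induction) and as $\tau_{k,n-2}$ on the second $\cong\III_{n-2}$, which under the shift by $\fib_{n+1}$ corresponds to $\ttt_n t_{k,n-2}\ttt_n^{-1}$ on $\hat F_n$; the two commute and multiply to $t_{k,n-1}\cdot\ttt_n t_{k,n-2}\ttt_n^{-1}=t_{k,n}$.

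I expect the main obstacle to be the bookkeeping in the case $k\le n-2$: one must check carefully that $\tau_{k,n}$ genuinely splits as the "product" of its restrictions to $\Set{I|n\notin I}$ and $\Set{I|n\in I}$, and that conjugation by the partial shift is exactly the $\ttt_n$-conjugate occurring in the definition of $t_{k,n}$, while simultaneously keeping the conventions for $\III_0$ (and the degenerate $\III_{-1}$, should one wish to start the recursion earlier) compatible with the base cases $n=1,2$. The remaining cases $k=n-1,n$ and the reduction to Theorem~\ref{thm:gen} are then essentially a direct translation.
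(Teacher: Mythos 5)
Your proposal is correct and follows essentially the same route as the paper: the paper's $\iota_n$ is exactly your recursively defined $\phi_n$, and its Lemmas on the intertwining $\iota_n\circ\tau_{k,n}=t_{k,n}\circ\iota_n$ carry out precisely your three-case induction ($k\le n-2$, $k=n-1$, $k=n$) before invoking Theorem~\ref{thm:gen}.
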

\begin{cor}
For $n=1,2,\ldots$,
 the action of the toggle group $\Gamma_n$ of $A_{n+1}$ on $\III_n$ is 
 $\fib_{n+2}$-transitive.
\end{cor}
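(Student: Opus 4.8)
The Corollary is an immediate consequence of Theorem~\ref{thm:togglegroupissymmetricgroup}: once we know $\Gamma_n=\Sym_{\III_n}$, the action of $\Gamma_n$ on $\III_n$ is $|\III_n|$-transitive, since the full symmetric group on a finite set $X$ carries any ordering of $X$ to any other, and $|\III_n|=\fib_{n+2}$ by the usual recursion for independent sets of a path (splitting $\III_n$ according to whether the endpoint $n$ lies in the set gives $|\III_n|=|\III_{n-1}|+|\III_{n-2}|$, with $|\III_0|=1=\fib_2$ and $|\III_1|=2=\fib_3$). So the substance is Theorem~\ref{thm:togglegroupissymmetricgroup}, and my plan is to deduce it from Theorem~\ref{thm:gen} by transporting the abstract generators $t_{k,n}$ to the togglings $\tau_{k,n}$ along a carefully chosen bijection.

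First I would build, by induction on $n$, a bijection $\phi_n\colon\III_n\to F_{n+2}$ refining the decomposition $F_{n+2}=F_{n+1}\sqcup\hat F_n$. Write $\III_n=\III_n^{0}\sqcup\III_n^{1}$ with $\III_n^{0}=\Set{I\in\III_n|n\notin I}$ and $\III_n^{1}=\Set{I\in\III_n|n\in I}$. The identifications $\III_n^{0}\cong\III_{n-1}$ (restrict to the subpath on $\Set{1,\ldots,n-1}$) and $\III_n^{1}\cong\III_{n-2}$ (send $I$ to $I\setminus\Set{n}$, legitimate since $n\in I$ forces $n-1\notin I$) have $\fib_{n+1}$ and $\fib_n$ elements, matching $|F_{n+1}|$ and $|\hat F_n|$. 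Define $\phi_n$ to agree with $\phi_{n-1}$ on $\III_n^{0}$ and with $I\mapsto\phi_{n-2}(I\setminus\Set{n})+\fib_{n+1}$ on $\III_n^{1}$, the base cases $n=0,1$ being checked by hand. The consistency invariant I must maintain is that $\phi_{n-1}$ then carries $\III_{n-1}^{0}\subset\III_{n-1}$ exactly onto $F_n\subset F_{n+1}$.

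Next I would prove, again by induction on $n$, that conjugation by $\phi_n$ sends $\tau_{k,n}$ to $t_{k,n}$ for all $1\le k\le n$, following the three cases of the recursion defining $t_{k,n}$. If $k\le n-2$, toggling $k$ never touches vertex $n$, so $\tau_{k,n}$ preserves both $\III_n^{0}$ and $\III_n^{1}$; on $\III_n^{0}\cong\III_{n-1}$ it equals $\tau_{k,n-1}$ and on $\III_n^{1}\cong\III_{n-2}$ it equals $\tau_{k,n-2}$ (here one checks $\tau_{n-2}$ acts identically in $A_n$ and in $A_{n-2}$ because $n-1$ is absent whenever $n$ is present), which under $\phi$ matches $t_{k,n}=t_{k,n-1}\cdot\ttt_n t_{k,n-2}\ttt_n^{-1}$. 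If $k=n-1$, then $\tau_{n-1,n}$ fixes $\III_n^{1}$ pointwise (one cannot insert $n-1$ next to $n$) and restricts to $\tau_{n-1,n-1}$ on $\III_n^{0}\cong\III_{n-1}$, matching $t_{n-1,n}=t_{n-1,n-1}$. If $k=n$, then $\tau_{n,n}$ exchanges each $I$ with $n-1,n\notin I$ and $I\cup\Set{n}$, and fixes everything else; under $\phi$ this is the product of transpositions exchanging $\III_{n-1}^{0}\subset\III_n^{0}$ (the sets with $n-1\notin I$, carried onto $F_n\subset F_{n+1}$) with $\III_n^{1}$ (carried onto $\hat F_n=\Set{i+\fib_{n+1}|i\in F_n}$), i.e.\ precisely $\ttt_n=t_{n,n}$. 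Hence $\phi_n\Gamma_n\phi_n^{-1}=\Braket{G_n}=\Sym_{\fib_{n+2}}$ by Theorem~\ref{thm:gen}, so $\Gamma_n=\Sym_{\III_n}$, and the Corollary follows.

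\textbf{The main obstacle.} Each ingredient is individually routine; the friction is in making the recursive bookkeeping mesh exactly. In particular I must arrange $\phi_{n-1}(\III_{n-1}^{0})=F_n$ so that the $k=n$ case produces literally $\ttt_n$ rather than merely a conjugate of it, and I must handle the boundary case $k=n-2$, where vertex $n-2$ has a neighbour in $A_n$ that it lacks in $A_{n-2}$. A secondary nuisance is tracking ambient subgroups: one needs $t_{k,n-1}\in\Sym_{F_n}\subset\Sym_{F_{n+1}}$ and $t_{k,n-2}\in\Sym_{F_n}$, so that $\ttt_n t_{k,n-2}\ttt_n^{-1}\in\Sym_{\hat F_n}$ and the two factors of $t_{k,n}$ commute, mirroring the fact that $\tau_{k,n}$ acts independently on $\III_n^{0}$ and $\III_n^{1}$. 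These are the points where I expect to need the most care.
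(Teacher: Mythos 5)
Your proposal is correct and follows essentially the same route as the paper: the corollary is read off from Theorem \ref{thm:togglegroupissymmetricgroup}, which is proved by constructing the same recursive indexing bijection $\III_n\to F_{n+2}$ (the paper's $\iota_n$), verifying $\iota_n\circ\tau_{k,n}=t_{k,n}\circ\iota_n$ by induction in the same three cases $k\le n-2$, $k=n-1$, $k=n$, and then invoking Theorem \ref{thm:gen}. The bookkeeping points you flag (that $\III_{n-1}^{0}$ lands on $F_n$ so the $k=n$ toggle is literally $\ttt_n$, and the boundary behaviour at $k=n-2$) are exactly the checks the paper's Lemmas \ref{lem:iota:n-2}--\ref{lem:iota:n} carry out.
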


\section{Proof of Main Theorems}
\label{sec:proof}
\subsection{Theorems \ref{thm:gen'} and \ref{thm:gen}}
\label{sec:proof:sym}
Here we show Theorems \ref{thm:gen'} and \ref{thm:gen}
by induction on $n$.
To show theorems,
we show some lemmas.
\begin{lemma}
\label{lem:g':step}
If $n\geq 3$ and $G_{n-2}$ generates $\Sym_{\fib_{n}}$,
then
$G'_n$ generates $\tilde \Sym_{\fib_{n}}$.
\end{lemma}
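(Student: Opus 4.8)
The plan is to carry a generating set of $\Sym_{\fib_{n}}$ over to $\tilde\Sym_{\fib_{n}}$ along the isomorphism recorded above. Write $\phi_{n}\colon\Sym_{\fib_{n}}\to\tilde\Sym_{\fib_{n}}$ for the isomorphism $t\mapsto t\cdot\ttt_{n}t\ttt_{n}^{-1}$. By hypothesis $G_{n-2}=\Set{t_{k,n-2}|k\le n-2}$ generates $\Sym_{\fib_{n}}$, so its image $\phi_{n}(G_{n-2})=\Set{t_{k,n-2}\cdot\ttt_{n}t_{k,n-2}\ttt_{n}^{-1}|k\le n-2}$ generates $\phi_{n}(\Sym_{\fib_{n}})=\tilde\Sym_{\fib_{n}}$. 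Hence it suffices to show that $G'_{n}$ and $\phi_{n}(G_{n-2})$ generate the same subgroup of $\Sym_{F_{n+2}}$. The cleanest route is to prove the generator-wise identity $t_{k,n}=\phi_{n}(t_{k,n-2})$ for all $1\le k\le n-2$; this gives $G'_{n}=\phi_{n}(G_{n-2})$, hence $\Braket{G'_{n}}=\phi_{n}\bigl(\Braket{G_{n-2}}\bigr)=\phi_{n}(\Sym_{\fib_{n}})=\tilde\Sym_{\fib_{n}}$, and the final sentence of the statement then follows because $\phi_{n}$ is an isomorphism with domain $\Sym_{\fib_{n}}$.

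To verify the identity $t_{k,n}=\phi_{n}(t_{k,n-2})$, work with the block decomposition $F_{n+2}=F_{n}\sqcup\hat F_{n-1}\sqcup\hat F_{n}$, on which $\ttt_{n}$ swaps $F_{n}$ with $\hat F_{n}$ and fixes $\hat F_{n-1}$ pointwise. Starting from the recursion $t_{k,n}=t_{k,n-1}\cdot\ttt_{n}t_{k,n-2}\ttt_{n}^{-1}$: since $t_{k,n-2}\in\Sym_{F_{n}}$, the factor $\ttt_{n}t_{k,n-2}\ttt_{n}^{-1}$ is supported on $\hat F_{n}$ and equals the $\Sym_{\hat F_{n}}$-component of $\phi_{n}(t_{k,n-2})$; and $t_{k,n-1}$, being supported on $F_{n+1}=F_{n}\sqcup\hat F_{n-1}$, commutes with it. So the identity reduces to showing $t_{k,n-1}=t_{k,n-2}$ as elements of $\Sym_{F_{n+1}}$. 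For $k=n-2$ this is the defining relation $t_{n-2,n-1}=t_{n-2,n-2}$. For $k\le n-3$ one applies the recursion once more, $t_{k,n-1}=t_{k,n-2}\cdot\ttt_{n-1}t_{k,n-3}\ttt_{n-1}^{-1}$, and one must control the correction term $\ttt_{n-1}t_{k,n-3}\ttt_{n-1}^{-1}$; I would do this by an induction descending through the recursion, at each stage recording precisely which of the three blocks $F_{n}$, $\hat F_{n-1}$, $\hat F_{n}$ each factor acts on, using the support information stated right after the definition of $t_{k,n}$ (that $t_{k,m}\in\Sym_{F_{m+1}}\times\Sym_{\hat F_{m}}$ for $k<m$, and $t_{m-1,m}\in\Sym_{F_{m+1}}$).

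The step I expect to be the main obstacle is exactly this support bookkeeping: one has to make sure that fully unwinding the recursion leaves no residual permutation on the middle block $\hat F_{n-1}$, so that each $t_{k,n}$ genuinely lies in the diagonal copy $\tilde\Sym_{\fib_{n}}$ of $\Sym_{\fib_{n}}$ and matches $\phi_{n}(t_{k,n-2})$ there; keeping the three-block indexing straight through the double recursion is the only delicate point, and once it is handled everything else in the argument is formal.
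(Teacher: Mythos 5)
You have put your finger on exactly the right spot: the whole argument reduces to the generator-wise identity $t_{k,n}=\varphi(t_{k,n-2})$, i.e.\ to $t_{k,n-1}=t_{k,n-2}$ for $1\le k\le n-2$, and you flag the vanishing of the residual permutation on the middle block $\hat F_{n-1}$ as the delicate point. Unfortunately that is precisely where the plan breaks down rather than a technicality to be dispatched. For $k\le n-3$ the correction term $\ttt_{n-1}t_{k,n-3}\ttt_{n-1}^{-1}$ is a nontrivial permutation supported on $\hat F_{n-1}$, so $t_{k,n-1}\neq t_{k,n-2}$ and hence $t_{k,n}\neq\varphi(t_{k,n-2})$. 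Concretely, take $n=4$, $k=1$: the paper's own example computes $t_{1,4}=(1,2)(4,5)(6,7)$, whereas $\varphi(t_{1,2})=(1,2)\cdot\ttt_4(1,2)\ttt_4^{-1}=(1,2)(6,7)$. The factor $(4,5)$ on the middle block $\hat F_{3}=\Set{4,5}$ does not cancel; and since every element of $\tilde\Sym_{\fib_4}$ fixes $4$ and $5$ pointwise, $t_{1,4}$ does not even lie in $\tilde\Sym_{\fib_4}$, so $\Braket{G'_4}\neq\tilde\Sym_{\fib_4}$. The identity you propose to verify is therefore false, and the proof cannot be completed as written.

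You should also know that the paper's own proof of this lemma takes the identical route and simply asserts that ``by definition, for $k\le n-2$, $t_{k,n}$ is the image $\varphi(t_{k,n-2})$,'' which contradicts the recursion $t_{k,n}=t_{k,n-1}\cdot\ttt_n t_{k,n-2}\ttt_n^{-1}$ (and the explicit value of $t_{1,4}$) whenever $k\le n-3$. So your ``support bookkeeping'' worry identifies a genuine defect in the statement itself, not just in your argument: with the definitions as written, the conclusion of the lemma (and hence Theorem \ref{thm:gen'}) already fails at $n=4$. To salvage it one would have to change something --- e.g.\ redefine $t_{k,n}$ for $k\le n-2$ as $t_{k,n-2}\cdot\ttt_n t_{k,n-2}\ttt_n^{-1}$ (which makes your transport argument, and the paper's, immediate), or replace $\tilde\Sym_{\fib_n}$ by a larger subgroup that is allowed to act nontrivially on $\hat F_{n-1}$ and re-examine how the later lemmas use it.
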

\begin{proof}
Consider the isomorphism
\begin{align*}
\varphi\colon \Sym_{\fib_{n}} &\to 
\tilde \Sym_{\fib_{n}}\\
t &\mapsto t\cdot \ttt_n t \ttt_n^{-1}.
\end{align*}
By definition, for $k\leq n-2$,
$t_{k,n}$ is the image $\varphi(t_{k,n-2})$ of $t_{k,n-2}$.
Hence we have
\begin{align*}
 \Braket{G'_n}
&=\Braket{\varphi(t_{1,n-2}),\varphi(t_{2,n-2}),\ldots,\varphi(t_{n-2,n-2})}\\
&=\varphi(\Braket{t_{1,n-2},t_{2,n-2},\ldots,t_{n-2,n-2}})\\
&=\varphi(\Braket{G_{n-2}}).
\end{align*}
Since $G_{n-2}$ generates $\Sym_{\fib_n}$,
we have
$\Braket{G'_n}=\varphi(\Sym_{\fib_n})=\tilde \Sym_{\fib_n}$.
\end{proof}

\begin{lemma}
\label{lem:g:alt}
If $n\geq 4$ and
$G'_n$ generates $\tilde \Sym_{\fib_{n}}$,
then 
the group $\Braket{G_n}$ generated by $G_n$
contains 
cyclic permutations
$(i,i+1,i+2)$ for $1\leq i \leq \fib_{n+2}-2$.
\end{lemma}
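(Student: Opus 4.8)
The plan is to prove the stronger assertion that $\Braket{G_n}$ contains the alternating group $\Alt_{\fib_{n+2}}$ on $F_{n+2}$; since every cyclic permutation $(i,i+1,i+2)$ is even, this immediately gives the lemma. Throughout, set $A=\{1,\dots,\fib_n\}$, $B=\{\fib_n+1,\dots,\fib_{n+1}\}$, and $C=\{\fib_{n+1}+1,\dots,\fib_{n+2}\}$, so that $F_{n+2}=A\sqcup B\sqcup C$ and $F_{n+1}=A\sqcup B$. By hypothesis $\Braket{G_n}$ contains $\Braket{G'_n}=\tilde\Sym_{\fib_n}$, which by construction is the diagonal copy of $\Sym_A$: for each $\sigma\in\Sym_A$ it contains the element $g_\sigma$ acting as $\sigma$ on $A$, as the mirror map $i+\fib_{n+1}\mapsto\sigma(i)+\fib_{n+1}$ on $C$, and as the identity on $B$. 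Besides these, $\Braket{G_n}$ contains $t_{n-1,n}=\ttt_{n-1}$, which interchanges $\{1,\dots,\fib_{n-1}\}\subseteq A$ with $B$ and fixes the rest of $F_{n+2}$, and $t_{n,n}=\ttt_n$, which interchanges $A$ with $C$ via $i\mapsto i+\fib_{n+1}$ and fixes $B$ pointwise. These are the only elements of $\Braket{G_n}$ that the argument uses.

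The heart of the matter is to produce a single $3$-cycle. Since $n\geq 4$, both $\{1,\dots,\fib_{n-1}\}$ and $A\setminus\{1,\dots,\fib_{n-1}\}$ are nonempty; fix $a$ in the former and $b$ in the latter and put $g=g_{(a,b)}=(a,b)(\bar a,\bar b)$, where $\bar x=x+\fib_{n+1}$. Because $\ttt_{n-1}$ sends $a$ to $a+\fib_n\in B$, fixes $b$, and fixes $\bar a$ and $\bar b$, one gets $\ttt_{n-1}g\ttt_{n-1}^{-1}=(a+\fib_n,b)(\bar a,\bar b)$, so $g\cdot\ttt_{n-1}g\ttt_{n-1}^{-1}=(a,b)(a+\fib_n,b)$ is a $3$-cycle on $\{a,b,a+\fib_n\}$ lying in $\Braket{G_n}$. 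Conjugating this $3$-cycle by the elements $g_\pi$ with $\pi\in\Sym_A$, which realize every permutation of $A$ while fixing $a+\fib_n\in B$, one obtains $(\pi(a),\pi(b),a+\fib_n)$, and hence $\Braket{G_n}$ contains every $3$-cycle $(\alpha,\beta,a+\fib_n)$ with $\alpha,\beta\in A$ distinct. Finally, as $a$ runs over $\{1,\dots,\fib_{n-1}\}$ the element $a+\fib_n$ runs over all of $B$, because $\fib_{n-1}+\fib_n=\fib_{n+1}$.

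It remains to assemble the alternating group. For fixed $c\in B$, the $3$-cycles $(\alpha,\beta,c)$ with $\alpha,\beta\in A$ are exactly the $3$-cycles of $\Sym_{A\cup\{c\}}$ that move $c$, so they generate $\Alt_{A\cup\{c\}}$. Any two of the sets $A\cup\{c\}$ for $c\in B$ meet in $A$, and $|A|=\fib_n\geq 2$, so the standard fact that $\Alt_X$ and $\Alt_Y$ generate $\Alt_{X\cup Y}$ whenever $|X\cap Y|\geq 2$ gives, on iterating, $\Alt_{A\cup B}=\Alt_{F_{n+1}}\subseteq\Braket{G_n}$. Conjugating by $\ttt_n$ yields $\Alt_{B\cup C}\subseteq\Braket{G_n}$, and since $(A\cup B)\cap(B\cup C)=B$ has $\fib_{n-1}\geq 2$ elements (as $n\geq 4$), the same fact gives $\Braket{G_n}\supseteq\Alt_{A\cup B\cup C}=\Alt_{F_{n+2}}$, which contains all of the $(i,i+1,i+2)$. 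Apart from that generation fact, the only general input is the standard observation that the $3$-cycles through a fixed point generate the whole alternating group. The main obstacle is the first step: $\tilde\Sym_{\fib_n}$ on its own contains no $3$-cycle at all, since each of its elements acts identically, up to the mirror, on $A$ and on $C$ while fixing $B$ pointwise; the argument turns entirely on the mismatch between the block $B$, which $\tilde\Sym_{\fib_n}$ fixes, and the sub-block $\{1,\dots,\fib_{n-1}\}$ of $A$ that $\ttt_{n-1}$ carries into $B$.
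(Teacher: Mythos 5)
Your proof is correct, and while it opens with the same key manoeuvre as the paper --- combining an element $g$ of the diagonal subgroup $\tilde\Sym_{\fib_n}$ with its conjugate $\ttt_{n-1}g\ttt_{n-1}^{-1}$ to break the $A$--$C$ mirror symmetry and extract a single $3$-cycle --- the two arguments diverge afterwards. The paper manufactures $(1,2,3)$ by a specific product-and-square computation and then runs an explicit case analysis over the possible distributions of $\{i,j,k\}$ among the three blocks $F_n$, $\hat F_{n-1}$, $\hat F_{n}$, producing just enough $3$-cycles to cover every consecutive triple, and defers the passage to the alternating group to Lemma \ref{lem:g:step}. You instead take the cleaner $3$-cycle $(a,b,a+\fib_n)$ straddling $A$ and $B$ (a product of two transpositions sharing exactly one point), sweep it by conjugation under the diagonal subgroup to all $(\alpha,\beta,c)$ with $\alpha,\beta\in A$ and $c\in B$, and then invoke the amalgamation fact that $\Alt_X$ and $\Alt_Y$ generate $\Alt_{X\cup Y}$ whenever $|X\cap Y|\geq 2$ to assemble $\Alt_{F_{n+1}}$ and then, after conjugating by $\ttt_n$, all of $\Alt_{F_{n+2}}$. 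This buys a shorter argument with no case analysis and yields the stronger conclusion $\Alt_{\fib_{n+2}}\subset\Braket{G_n}$ directly, which is precisely what Lemma \ref{lem:g:step} needs; the only cost is the reliance on two unproved (but genuinely standard) group-theoretic facts --- that the $3$-cycles through a fixed point generate the alternating group, and the amalgamation lemma --- which would need a citation or a short proof if your argument were to replace the paper's. All the numerical checks are valid for $n\geq 4$: the set $A\setminus\Set{1,\ldots,\fib_{n-1}}$ is nonempty since $\fib_{n-2}\geq 1$, the element $a+\fib_n$ does lie in $B$ and ranges over all of $B$ as $a$ ranges over $\Set{1,\ldots,\fib_{n-1}}$, and $|A|=\fib_n\geq 3$ and $|B|=\fib_{n-1}\geq 2$ as required.
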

\begin{proof}
We decompose $F_{n+2}$ into
three subsets
\begin{align*}
 &F_{n}=\Set{1,2,\ldots,\fib_n},\\
 &\hat F_{n-1}=\Set{i+\fib_{n}|i\in F_{n-1}},\\
 &\hat F_{n}=\Set{i+\fib_{n+1}|i\in F_{n}}.
\end{align*}

First we show that $(1,2,3)\in \Braket{G_n}$.
Since $n\geq 4$, $\fib_n>\fib_3=2$.
Since $\tilde \Sym_{\fib_{n}}=\Braket{G'_n} \subset \Braket{G_n}$,
the group $\Braket{G_n}$ contains the following elements:
\begin{align*}
(1,2)\ttt_n(1,2)\ttt_n^{-1}&=(1,2)(\fib_{n+1}+1,\fib_{n+1}+2),\\
(2,3)\ttt_n(2,3)\ttt_n^{-1}&=(2,3)(\fib_{n+1}+2,\fib_{n+1}+3). 
\end{align*}
Since $t_{n-1,n}=\ttt_{n-1}\in G_n$,
we have
\begin{align*}
\ttt_{n-1}(1,2)(\fib_{n+1}+1,\fib_{n+1}+2)\ttt_{n-1}^{-1}\in \Braket{G_n}. 
\end{align*}
Since $\ttt_{n-1}\in \Sym_{F_{n+1}}$,
\begin{align*}
&\ttt_{n-1}(1,2)(\fib_{n+1}+1,\fib_{n+1}+2)\ttt_{n-1}^{-1}\\
&=
\ttt_{n-1}(1,2)\ttt_{n-1}^{-1}\cdot (\fib_{n+1}+1,\fib_{n+1}+2)\\
&=
(f_{n}+1,f_{n}+2)(\fib_{n+1}+1,\fib_{n+1}+2).
\end{align*}
Hence $\Braket{G_n}$ contains
\begin{align*}
&(f_{n}+1,f_{n}+2)(\fib_{n+1}+1,\fib_{n+1}+2)
\cdot
(2,3)(\fib_{n+1}+2,\fib_{n+1}+3)\\
&=
(2,3)\cdot 
(f_{n}+1,f_{n}+2)
\cdot
(\fib_{n+1}+1,\fib_{n+1}+2)(\fib_{n+1}+2,\fib_{n+1}+3)\\
&=
(2,3)\cdot 
(f_{n}+1,f_{n}+2)
\cdot
(\fib_{n+1}+1,\fib_{n+1}+2,\fib_{n+1}+3).
\end{align*}
Hence $\Braket{G_n}$ contains the square
\begin{align*}
&((2,3)\cdot 
(f_{n}+1,f_{n}+2)
\cdot
(\fib_{n+1}+1,\fib_{n+1}+2,\fib_{n+1}+3))^2\\
&=
(2,3)^2\cdot 
(f_{n}+1,f_{n}+2)^2
\cdot
(\fib_{n+1}+1,\fib_{n+1}+2,\fib_{n+1}+3)^2\\
&=(\fib_{n+1}+1,\fib_{n+1}+3,\fib_{n+1}+2)
\end{align*}
of the element.
Since $\ttt_n (\fib_{n+1}+1,\fib_{n+1}+3,\fib_{n+1}+2) \ttt_n^{-1}=(1,3,2)$,
$\Braket{G_n}$ contains
cyclic permutations $(1,3,2)$ and $(1,2,3)$.

Next we show that 
$\Braket{G_n}$
contains
the cyclic permutation $(i,j,k)$
for $i,j,k\in F_n$.
Fix an element $\sigma \in \Sym_{F_b}$ 
such that $\sigma(1)=i$, $\sigma(2)=j$, $\sigma(3)=k$.
Since 
$\tilde \Sym_{\fib_{n}}=\Braket{G'_n}\subset \Braket{G_n}$,
$\Braket{G_n}$ 
contains
$\tilde\sigma=\sigma\ttt_n\sigma \ttt_n^{-1} \in \tilde \Sym_{\fib_{n}}$.
Hence 
$\Braket{G_n}$ 
contains
\begin{align*}
\tilde\sigma(1,2,3)\tilde\sigma^{-1}
&=(\tilde\sigma(1),\tilde\sigma(2),\tilde\sigma(3))
=(\sigma(1),\sigma(2),\sigma(3))
=(i,j,k).
\end{align*}

Next we show that 
$\Braket{G_n}$
contains
the cyclic permutation $(i,j,k)$
for $i,j,k\in \hat F_{n-1}=\Set{i'+\fib_n|i'\in F_{n-1}}$.
Since 
$F_{n-1}$ contains
$i-\fib_n$,  $j-\fib_n$ and  $k-\fib_n$ 
for $i,j,k\in \hat F_{n-1}$,
$\Braket{G_n}$ contains $(i-\fib_n,j-\fib_n,k-\fib_n)$.
Hence $\Braket{G_m}$ contains
\begin{align*}
&\ttt_{n-1}(i-\fib_n,j-\fib_n,k-\fib_n) \ttt_{n-1}^{-1}\\
&=
(\ttt_{n-1}(i-\fib_n),\ttt_{n-1}(j-\fib_n))\ttt_{n-1}(k-\fib_n))\\
&=
(i,j,k).
\end{align*}

Next we show that 
$\Braket{G_n}$
contains
the cyclic permutation $(i,j,k)$
for $i,j,k\in \hat F_{n}=\Set{i+\fib_{n+1}|i\in F_{n}}$.
Since 
$F_{n}$ contains
$i-\fib_{n+1}$,  $j-\fib_{n+1}$ and  $k-\fib_{n+1}$ 
for $i,j,k\in \hat F_{n}$,
$\Braket{G_n}$ contains $(i-\fib_{n+1},j-\fib_{n+1},k-\fib_{n+1})$.
Hence $\Braket{G_n}$ contains
\begin{align*}
&\ttt_{n}(i-\fib_{n+1},j-\fib_{n+1},k-\fib_{n+1}) \ttt_{n}^{-1}\\
&=
(\ttt_{n}(i-\fib_{n+1}),\ttt_{n-1}(j-\fib_{n+1}))\ttt_{n}(k-\fib_{n+1}))\\
&=
(i,j,k).
\end{align*}

Next 
$\Braket{G_n}$
contains
the cyclic permutation $(i,j,k)$
for $i\in F_n$ and $j,k\in \hat F_{n-1}=\Set{i'+\fib_{n}|i'\in F_{n-1}}$.
For $j,k\in \hat F_{n-1}$,
we have
$j-\fib_n$ and $k-\fib_n\in F_{n-1}\subset F_{n}$.
Since $f_{n} \in F_{n}\setminus F_{n-1}$,
$\Braket{G_n}$ contains
the cyclic permutation $(f_n,j-\fib_n,k-\fib_n)$.
Hence 
$\Braket{G_n}$ contains
\begin{align*}
&\ttt_{n-1} (f_n,j-\fib_n,k-\fib_n)  \ttt_{n-1}^{-1}\\
&=
 (\ttt_{n-1}(f_n),\ttt_{n-1}(j-\fib_n),\ttt_{n-1}(k-\fib_n))\\
&=(f_n,j,k).
\end{align*}
Since $\tilde \Sym_{\fib_{n}}=\Braket{G'_n} \subset \Braket{G_n}$,
$\Braket{G_n}$ contains
\begin{align*}
\tilde\sigma=(i,f_n)\ttt_n(i,f_n)\ttt_n^{-1}=(i,f_n)(\fib_{n+1}+i,\fib_{n+1}+f_n) 
\end{align*}
for $i<f_n$.
Hence $\Braket{G_n}$ contains
\begin{align*}
\tilde\sigma (f_n,j,k) \tilde\sigma^{-1}
=(\tilde\sigma(f_n),\tilde\sigma(j),\tilde\sigma(k))
=(i,j,k).
\end{align*}

Next 
$\Braket{G_n}$
contains
the cyclic permutation $(i,j,k)$
for
$i,j\in \hat F_{n-1}=\Set{i'+\fib_{n}|i'\in F_{n-1}}$ and
$k\in \hat F_{n}=\Set{i+\fib_{n+1}|i\in F_{n}}$.
Since $k-\fib_{n+1}\in F_n$
for $k\in\hat F_{n}$, 
$\Braket{G_n}$
contains
the cyclic permutation  $(k-\fib_{n+1},i,j)$.
Hence
$\Braket{G_n}$
contains
\begin{align*}
\ttt_n (k-\fib_{n+1},i,j)\ttt_n^{-1}
&=
(\ttt_n(k-\fib_{n+1}),\ttt_n(i),\ttt_n(j))\\
&=
(k,i,j)=(i,j,k).
\end{align*}

Next 
$\Braket{G_n}$
contains
the cyclic permutation $(i,j,k)$
for $i,j\in F_n$ and 
$k\in \hat F_{n-1}=\Set{i'+\fib_{n}|i'\in F_{n-1}}$.
Since 
$F_{n-1}$ contains $k-\fib_{n}$ for $k\in\hat F_{n-1}$
and
$\hat F_{n-1}$ contains $\fib_n+1,\fib_n+2$,
$\Braket{G_n}$ contains
$(k-\fib_{n}, \fib_n+1,\fib_n+2)$.
Hence
$\Braket{G_n}$ contains
\begin{align*}
& \ttt_{n-1} (k-\fib_{n}, \fib_n+1,\fib_n+2)  \ttt_{n-1}^{-1}\\
&=(\ttt_{n-1}(k-\fib_{n}),\ttt_{n-1}( \fib_n+1),\ttt_{n-1}(\fib_n+2))\\
&=(k,1,2).
\end{align*}
Fix an element $\sigma \in \Sym_{\fib_n}$
such that $\sigma(1)=i$ and  $\sigma(2)=j$.
Since $\tilde \Sym_{\fib_{n}}=\Braket{G'_n} \subset \Braket{G_n}$,
$\Braket{G_n}$ contains
$\tilde\sigma=\sigma\ttt_n \sigma \ttt_n^{-1}$.
Hence
$\Braket{G_n}$ contains
\begin{align*}
 \tilde\sigma (k,1,2)  \tilde\sigma^{-1}
=( \tilde\sigma(k), \tilde\sigma(1), \tilde\sigma(2))
=(k,i,j)=(i,j,k).
\end{align*}

Next 
$\Braket{G_n}$
contains
the cyclic permutation $(i,j,k)$
for
$i\in \hat F_{n-1}=\Set{i'+\fib_{n}|i'\in F_{n-1}}$ and
$j,k\in \hat F_{n}=\Set{i+\fib_{n+1}|i\in F_{n}}$.
Since
$F_n$ contains
 $j-\fib_{n+1}$
 $k-\fib_{n+1}$
for $j,k\in\hat F_{n}$, 
$\Braket{G_n}$
contains
the cyclic permutation  $(j-\fib_{n+1},k-\fib_{n+1},i)$.
Hence
$\Braket{G_n}$
contains
\begin{align*}
&\ttt_n (j-\fib_{n+1},i-\fib_{n+1},i)\ttt_n^{-1}\\
&=
(\ttt_n(j-\fib_{n+1}),\ttt_n(k-\fib_{n+1}),\ttt_n(i))\\
&=
(j,k,i)=(i,j,k).
\end{align*}

Since $n\geq 4$, $\#\hat F_{n-1}=\fib_{n-1}\geq\fib_3=2$.
Hence we have $(i,i+1,i+2)\in \Braket{G}$ for $1\leq i \leq \fib_{n+2}-2$.
\end{proof}

\begin{lemma}
\label{lem:g:step}
If $n\geq 4$ and
$G'_n$ generates $\tilde \Sym_{\fib_{n}}$,
then 
$G_n$ generates $\Sym_{F_{n+2}}$.
\end{lemma}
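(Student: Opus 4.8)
The plan is to deduce the statement from Lemma~\ref{lem:g:alt} by a parity argument. Under the present hypotheses Lemma~\ref{lem:g:alt} applies, so $\Braket{G_n}$ contains every adjacent $3$-cycle $(i,i+1,i+2)$ with $1\leq i\leq\fib_{n+2}-2$. It is classical that these adjacent $3$-cycles generate the full alternating group $\Alt_{F_{n+2}}$ on $F_{n+2}$---for instance, by conjugation and multiplication they produce all $3$-cycles, which in turn generate $\Alt_{F_{n+2}}$. Hence $\Alt_{F_{n+2}}\subseteq\Braket{G_n}$.

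Since $\Alt_{F_{n+2}}$ has index $2$ in $\Sym_{F_{n+2}}$ and $\Braket{G_n}\subseteq\Sym_{F_{n+2}}$, it then suffices to exhibit a single odd permutation in $G_n$. By definition $t_{n,n}=\ttt_n$ is a product of $\fib_n$ transpositions and $t_{n-1,n}=t_{n-1,n-1}=\ttt_{n-1}$ is a product of $\fib_{n-1}$ transpositions, and both lie in $G_n$. Two consecutive Fibonacci numbers are coprime (alternatively: if $\fib_{n-1}$ and $\fib_n$ were both even, then $\fib_{n-2}=\fib_n-\fib_{n-1}$ would be even as well, and descending to $\fib_1=1$ gives a contradiction), so at least one of $\fib_{n-1},\fib_n$ is odd; hence at least one of $\ttt_{n-1},\ttt_n$ is an odd permutation belonging to $\Braket{G_n}$.

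Combining the two observations, $\Braket{G_n}$ contains $\Alt_{F_{n+2}}$ together with an odd permutation, so $\Braket{G_n}=\Sym_{F_{n+2}}$, which is the desired conclusion.

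I expect the only point needing genuine care to be the first step: one must make sure that Lemma~\ref{lem:g:alt} really delivers the \emph{full} alternating group, i.e.\ that the standard fact ``adjacent $3$-cycles generate $\Alt_m$'' is invoked correctly rather than some proper subgroup being produced. Everything after that---the Fibonacci parity bookkeeping and the index-two argument---is routine.
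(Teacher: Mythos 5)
Your proof is correct and follows essentially the same route as the paper's: adjacent $3$-cycles from Lemma~\ref{lem:g:alt} yield $\Alt_{\fib_{n+2}}$, one of $\ttt_{n-1},\ttt_n$ is odd because consecutive Fibonacci numbers cannot both be even, and the index-two argument finishes. Your explicit justification of the parity claim and of why adjacent $3$-cycles generate the full alternating group only makes the argument slightly more complete than the paper's version.
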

\begin{proof}
Since
$\Braket{G_n}$
 contains 
cyclic permutations
$(i,i+1,i+2)$ for $1\leq i \leq \fib_{n+2}-2$ by Lemma \ref{lem:g:alt},
$\Braket{G_n}$
contains 
the $\fib_{n+2}$-th alternating group $\Alt_{\fib_{n+2}}$.
Since either $f_{n}$ or $f_{n-1}$
is an odd number,
the alternating group  $\Alt_{\fib_{n+2}}$
does not contain either $t_{n,n}=\ttt_n$ or $t_{n,n-1}=\ttt_{n-1}$.
Since the alternating group $\Alt_{\fib_{n+2}}$ is
a subgroup of $\Sym_{\fib_{n+2}}$ of index $2$,
the subgroup $\Braket{G_n}$ coincide with $\Sym_{\fib_{n+2}}$.
\end{proof}

By Examples \ref{basecase:1}, \ref{basecase:2} and \ref{basecase:3},
we can show the base case, i.e., the case where $n=1,2,3$.
Moreover we have Lemmas \ref{lem:g':step} and \ref{lem:g:step}
for induction step.
Hence we have Theorems \ref{thm:gen'} and \ref{thm:gen}.

\subsection{Proof of Theorem \ref{thm:togglegroupissymmetricgroup}}
\label{sec:proof:tog}
To show Theorem \ref{thm:togglegroupissymmetricgroup},
here we give an bijection between $\III_n$ and $F_{n+2}$,
and translate togglings to permutations in $\Sym_{f_{n+2}}$.

We can  decompose the set $\III_n$ of 
independent set of $A_n$
into the following two subsets:
\begin{align*}
\Set{I\in \III_n| n\not\in I },\\
\Set{I\in \III_n| n\in I}. 
\end{align*}
It is easy to see that 
$\Set{I\in \III_n| n\not\in I } =  \III_{n-1}$.
We define $\hat \III_{n-2}$ to be the set 
$\Set{I\in \III_n| n\in I}$.
For $I\in \III_n$, by definition,
we have $n-1 \not\in I$ if $n\in I$.
Hence we have the bijection $\varphi_{n-2}$
from $\hat \III_{n-2}$ to $\III_{n-2}$
by removing the vertex $n$.
i.e., the map defined by
\begin{align*}
 \varphi_{n-2} \colon \hat \III_{n-2} &\to \III_{n-2}\\
 I&\mapsto I\setminus\Set{n}.
\end{align*}
Hence we have the recurrence relation  $\# \III_n=\# \III_{n-1}+\# \III_{n-2}$.
Since $\# \III_1=2$ and $\# \III_2=3$,
the number $\# \III_n$ of independent sets of $A_{n}$
is equal to the $(n+2)$-th Fibonacci number $\fib_{n+2}$.

Since the number of $\III_n$ is $\fib_{n+2}=\# F_{n+2}$,
the elements of $\III_n$ can be indexed by $F_{n+2}$.
We give the index $\iota_n(I)$ of an independent set $I$ in $\III_n$ in the
following manner:
In the case  where $n=1$, we define
\begin{align*}
\iota_1(I) =
\begin{cases}
 1 &(I=\emptyset)\\
 2 &(I=\Set{1}).
\end{cases}
\end{align*}
In the case  where $n=2$, we define
\begin{align*}
\iota_1(I) =
\begin{cases}
 1 &(I=\emptyset)\\
 2 &(I=\Set{1})\\
 3 &(I=\Set{2}).
\end{cases}
\end{align*}
In the case where $n>2$, we define
\begin{align*}
 \iota_n(I)=
\begin{cases}
\iota_{n-1}(I) &(I \in \III_{n-1}) \\
\iota_{n-2}( \varphi_{n-2}(I) ) + \fib_{n+1}&(I \in \hat \III_{n-2}),
\end{cases}
\end{align*}
recursively.
By definition,
the index of an independent set in $\III_{n-1}$ is in $F_{n+1}$.
The index of an independent set in $\hat \III_{n-2}$, corresponding to $\III_{n-2}$,
is in $\hat F_{n}$.
Moreover,
if $I\in\III_n$ satisfies $\iota_n(I)\leq \fib_{k+2}$,
then $I\in \III_k$.
We also have
\begin{align*}
 \iota_n(I\cup \Set{n})
 = \iota_{n-2}(I)+\fib_{n+1}
 = \iota_{n-1}(I)+\fib_{n+1}
 = \iota_{n}(I)+\fib_{n+1}
\end{align*}
for $I \in \III_{n-2}$.

\begin{example}
 For $n=1,2,3,4$,
independent sets in $I_n$ are indexed as in the Figure \ref{fig:index}.
\begin{figure}
\begin{align*}
&
\begin{array}[t]{cc}
\III_1&F_3\\
\Set{}&1\\
\Set{1}&2\\
\end{array}
&&
\begin{array}[t]{cc}
\III_2&F_4\\
\Set{} &1\\
\Set{1} &2\\
\Set{2} &3\\
\end{array}
&&
\begin{array}[t]{cc}
\III_3&F_5\\
\Set{}&1\\
\Set{1}&2\\
\Set{2}&3\\\hline
\Set{3}&4\\
\Set{1,3}&5\\
\end{array}
&&
\begin{array}[t]{cc}
\III_4&F_6\\
\Set{}&1\\
\Set{1}&2\\
\Set{2}&3\\
\Set{3}&4\\
\Set{1,3}&5\\\hline
\Set{4}&6\\
\Set{1,4}&7\\
\Set{2,4}&8\\
\end{array}
\end{align*}
\caption{$\iota_n(I)$}
\label{fig:index}
\end{figure}
\end{example}

\begin{example}
\label{ex:iotatau:1}
It follows from direct calculation that
\begin{align*}
 &\iota_1(\tau_{1,1}(\Set{}))= \iota_1(\Set{1}) = 2,\\
 &t_{1,1}(\iota_1(\Set{}))= t_{1,1}(1)=(1,2)(1)=2, \\
 &\iota_1(\tau_{1,1}(\Set{1}))= \iota_1(\Set{}) = 1,\\
 &t_{1,1}(\iota_1(\Set{1}))= t_{1,1}(2)=(1,2)(2)=1.
\end{align*} 
 Hence
\begin{align*}
 \iota_1(\tau_{1,1}(I))=t_{1,1}(\iota_n(I))
\end{align*} 
for $I\in\III_1$.
\end{example}
\begin{example}
\label{ex:iotatau:2}
It follows from direct calculation that
\begin{align*}
 &\iota_2(\tau_{1,2}(\Set{}))= \iota_2(\Set{1}) = 2, \\
 &t_{1,2}(\iota_2(\Set{}))= t_{1,2}(1)=(1,2)(1)=2,\\
 &\iota_2(\tau_{1,2}(\Set{1}))= \iota_2(\Set{}) = 1, \\
 &t_{1,2}(\iota_2(\Set{1}))= t_{1,2}()=(1,2)(1)=1,\\
 &\iota_2(\tau_{1,2}(\Set{2}))= \iota_2(\Set{2}) = 3, \\
 &t_{1,2}(\iota_2(\Set{2}))= t_{1,2}(3)=(1,2)(3)=3.\\
\end{align*} 
Hence $\iota_2(\tau_{1,2}(I))=t_{1,2}(\iota_n(I))$.
It also follows that 
\begin{align*}
 &\iota_2(\tau_{2,2}(\Set{}))= \iota_2(\Set{2}) = 3, \\
 &t_{2,2}(\iota_2(\Set{}))= t_{2,2}(1)=(1,3)(1)=3,\\
 &\iota_2(\tau_{2,2}(\Set{1}))= \iota_2(\Set{1}) = 2, \\
 &t_{2,2}(\iota_2(\Set{1}))= t_{1,2}(2)=(1,3)(2)=2,\\
 &\iota_2(\tau_{2,2}(\Set{2}))= \iota_2(\Set{}) = 1, \\
 &t_{2,2}(\iota_2(\Set{2}))= t_{2,2}(3)=(1,3)(3)=1.\\
\end{align*} 
Hence $\iota_2(\tau_{2,2}(I))=t_{2,2}(\iota_n(I))$
for $I\in\III_2$.
\end{example}

By Examples \ref{ex:iotatau:1} and \ref{ex:iotatau:1},
we have
$\iota_n\circ\tau_{k,n}=t_{k,n}\circ \iota_n$
for $n=1,2$.
For $n>2$, we have the following lemmas:
\begin{lemma}
\label{lem:iota:n-2}
Let $n>2$ and $k<n-1$.
If 
$\iota_{n-1}\circ \tau_{k,n-1}=t_{k,n-1}\circ \iota_{n-1}$,
then $\iota_n\circ \tau_{k,n}=t_{k,n}\circ\iota_n$.
\end{lemma}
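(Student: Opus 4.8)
The plan is to check $\iota_n\circ\tau_{k,n}=t_{k,n}\circ\iota_n$ separately on the two blocks of the decomposition $\III_n=\III_{n-1}\sqcup\hat\III_{n-2}$, matching them with the block decomposition $F_{n+2}=F_{n+1}\sqcup\hat F_n$ of the index set and with the factorization $t_{k,n}=t_{k,n-1}\cdot\ttt_n t_{k,n-2}\ttt_n^{-1}$, which is the relevant case of the recursion since $k<n-1$. First I would record the blockwise action of $t_{k,n}$: as $t_{k,n-2}\in\Sym_{F_n}$, conjugation by $\ttt_n$ puts $\ttt_n t_{k,n-2}\ttt_n^{-1}$ in $\Sym_{\hat F_n}$, where it fixes $F_{n+1}$ pointwise and sends $i+\fib_{n+1}\mapsto t_{k,n-2}(i)+\fib_{n+1}$ for $i\in F_n$, while $t_{k,n-1}\in\Sym_{F_{n+1}}$ fixes $\hat F_n$ pointwise; hence $t_{k,n}$ restricted to $F_{n+1}$ is $t_{k,n-1}$, and $t_{k,n}(i+\fib_{n+1})=t_{k,n-2}(i)+\fib_{n+1}$ on $\hat F_n$.

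Next I would prove the analogous reduction for the toggle. Because $k\le n-2$, toggling $k$ in $A_n$ only inspects the vertices $k-1,k,k+1$ (all $\le n-1$) and only adds or removes $k$, so $\tau_{k,n}$ preserves both $\III_{n-1}$ and $\hat\III_{n-2}$; and comparing the case definitions gives $\tau_{k,n}(I)=\tau_{k,n-1}(I)$ for $I\in\III_{n-1}$ and $\tau_{k,n}(J\cup\Set{n})=\tau_{k,n-2}(J)\cup\Set{n}$ for $J\in\III_{n-2}$. The only case that is not immediate is $k=n-2$ in the second identity, where $k$ has a neighbour $n-1$ in $A_n$ but is the end vertex of $A_{n-2}$; there the extra requirement ``$n-1\notin J\cup\Set{n}$'' is automatic since $J\subseteq\Set{1,\ldots,n-2}$, so the two ``add'' conditions agree. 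Applying this same reduction one level lower, on $\III_{n-2}\subseteq\III_{n-1}$, and combining with the hypothesis of the lemma, I obtain the auxiliary identity $\iota_{n-2}\circ\tau_{k,n-2}=t_{k,n-2}\circ\iota_{n-2}$: indeed $\iota_{n-1}$ restricts to $\iota_{n-2}$ on $\III_{n-2}$, $\tau_{k,n-1}$ restricts to $\tau_{k,n-2}$ there, and $t_{k,n-1}$ agrees with $t_{k,n-2}$ on $F_n$ (the factor distinguishing them lies in $\Sym_{\hat F_{n-1}}$, or $t_{n-2,n-1}=t_{n-2,n-2}$ when $k=n-2$).

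With both ingredients in hand, the two block computations are short. For $I\in\III_{n-1}$: $\iota_n(\tau_{k,n}(I))=\iota_{n-1}(\tau_{k,n-1}(I))=t_{k,n-1}(\iota_{n-1}(I))=t_{k,n}(\iota_n(I))$, using $\iota_n|_{\III_{n-1}}=\iota_{n-1}$, the hypothesis, and that $\iota_n(I)\in F_{n+1}$ where $t_{k,n}$ acts as $t_{k,n-1}$. For $I=J\cup\Set{n}\in\hat\III_{n-2}$ with $J\in\III_{n-2}$: $\iota_n(\tau_{k,n}(I))=\iota_{n-2}(\tau_{k,n-2}(J))+\fib_{n+1}=t_{k,n-2}(\iota_{n-2}(J))+\fib_{n+1}=t_{k,n}(\iota_{n-2}(J)+\fib_{n+1})=t_{k,n}(\iota_n(I))$, using $\iota_n(J\cup\Set{n})=\iota_{n-2}(J)+\fib_{n+1}$, the auxiliary identity, and the blockwise action of $t_{k,n}$ on $\hat F_n$. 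As $\III_n=\III_{n-1}\sqcup\hat\III_{n-2}$, this proves the lemma. I expect the main obstacle to be organizational rather than conceptual: one must keep straight the several compatibilities between the toggles, the index maps $\iota_\bullet$, and the permutations $t_{k,\bullet}$ as one passes among $A_n$, $A_{n-1}$ and $A_{n-2}$, and in particular handle the boundary vertex $k=n-2$ with care.
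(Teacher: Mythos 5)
Your proof is correct and follows essentially the same route as the paper's: split $\III_n$ into $\III_{n-1}$ and $\hat\III_{n-2}$, reduce $\tau_{k,n}$ to a lower-rank toggle on each block, and match this against the factorization $t_{k,n}=t_{k,n-1}\cdot\ttt_n t_{k,n-2}\ttt_n^{-1}$. The only cosmetic difference is that on the block $\hat\III_{n-2}$ you descend to level $n-2$ while the paper applies the hypothesis at level $n-1$ to $I\setminus\Set{n}$; your version is in fact slightly more careful, since it makes explicit the boundary case $k=n-2$ and the agreement of $t_{k,n-1}$ with $t_{k,n-2}$ on $F_n$, which the paper leaves implicit.
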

\begin{proof}
First we consider the case where 
$\iota_n(I)\leq \fib_{n+1}$.
In this case,  $\III_{n-1}$ contains $I$.
Hence
we have $\tau_{k,n}(I)=\tau_{k,n-1}(I)$,
which implies $\iota_n(\tau_{k,n}(I)) =\iota_n(\tau_{k,n-1}(I))$.
Since 
$\III_{n-1}$ contains $\tau_{k,n-1}(I)$,
we have $\iota_n(\tau_{k,n-1}(I))=\iota_{n-1}(\tau_{k,n-1}(I))$.
Since we have  
\begin{align*}
\iota_{n-1}(\tau_{k,n-1}(I))=t_{k,n-1}(\iota_{n-1}(I))=t_{k,n-1}(\iota_{n}(I)). 
\end{align*}
Since $t_{k,n}=t_{k,n-1}$,
$\iota_n(\tau_{k,n}(I)) =t_{k,n}\iota_{n}(I)$.

Next consider the case where $\fib_{n+1}<\iota_n(I)$.
We show that 
$\iota_n(\tau_{k,n}(I))$ equals
\begin{align*}
 t_{k,n}(\iota_n(I))
&=t_{k,n-1}\ttt_n t_{k,n-1}\ttt_n^{-1} (\iota_n(I))\\
&=\ttt_n t_{k,n-1}\ttt_n^{-1} (\iota_n(I))\\
&= t_{k,n-1}(\iota_n(I)-\fib_{n+1})+\fib_{n+1}.
\end{align*}
In this case we have $n\in I$.
Hence $I'=I\setminus\Set{n}\in \III_{n-1}$ and 
$\iota_n(I)=\iota(I')+\fib_{n+1}$.
It is easy to show that
\begin{align*}
 \tau_{k,n}(I)=\tau_{k,n-1}(I')\cup \Set{n}.
\end{align*}
Hence we have
\begin{align*}
\iota_n(\tau_{k,n}(I))
&=
\iota_n(\tau_{k,n-1}(I')\cup \Set{n})\\
&=
\iota_n(\tau_{k,n-1}(I'))+\fib_{n+1}\\
&=
\iota_{n-1}(\tau_{k,n-1}(I'))+\fib_{n+1}.
\end{align*}
By assumption, we have $\iota_{n-1}(\tau_{k,n-1}(I'))=t_{k,n-1}(\iota_{n-1}((I'))$,
we have
\begin{align*}
\iota_n(\tau_{k,n}(I))
&=\iota_{n-1}(\tau_{k,n-1}(I'))+\fib_{n+1}\\
&=t_{k,n-1}(\iota_{n-1}((I'))+\fib_{n+1}\\
&=t_{k,n-1}(\iota_{n-1}((I)-\fib_{n+1})+\fib_{n+1}.
\end{align*} 
\end{proof}

\begin{lemma}
\label{lem:iota:n-1}
Let $n>2$.
If 
$\iota_{n-1}\circ \tau_{n-1,n-1}=t_{n-1,n-1}\circ \iota_{n-1}$,
then $\iota_n\circ\tau_{n-1,n}=t_{n-1,n}\circ \iota_n$.
\end{lemma}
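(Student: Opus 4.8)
The plan is to analyze $\tau_{n-1,n}$ separately on the two parts $\III_{n-1}$ and $\hat\III_{n-2}$ of the decomposition of $\III_n$ used above, after first unwinding the definitions to note that $t_{n-1,n}=t_{n-1,n-1}=\ttt_{n-1}$ and that $\ttt_{n-1}$ lies in $\Sym_{F_{n+1}}=\Sym_{\fib_{n+1}}$: it is the product of the transpositions $(i,\fib_n+i)$ for $i\le\fib_{n-1}$, so it moves no index exceeding $\fib_n+\fib_{n-1}=\fib_{n+1}$.

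First I would handle $I\in\III_{n-1}$, equivalently $\iota_n(I)\le\fib_{n+1}$, equivalently $n\notin I$. The key observation is that when $n\notin I$ the toggling at the vertex $n-1$ produces the same result in $A_n$ as in $A_{n-1}$: it deletes $n-1$ if $n-1\in I$, and inserts $n-1$ exactly when $n-2,n-1\notin I$, the additional constraint $n\notin I$ present in $A_n$ being automatic. Hence $\tau_{n-1,n}(I)=\tau_{n-1,n-1}(I)$, this set again avoids $n$, and using the hypothesis together with $\iota_n|_{\III_{n-1}}=\iota_{n-1}$ one gets
\begin{align*}
\iota_n(\tau_{n-1,n}(I))=\iota_{n-1}(\tau_{n-1,n-1}(I))=t_{n-1,n-1}(\iota_{n-1}(I))=\ttt_{n-1}(\iota_n(I))=t_{n-1,n}(\iota_n(I)).
\end{align*}

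Second I would handle $I\in\hat\III_{n-2}$, equivalently $\iota_n(I)>\fib_{n+1}$, equivalently $n\in I$. Then $n-1\notin I$, and since $n\in I$ the vertex $n-1$ cannot be inserted, so $\tau_{n-1,n}(I)=I$; on the other side $t_{n-1,n}=\ttt_{n-1}$ fixes every index larger than $\fib_{n+1}$, so $t_{n-1,n}(\iota_n(I))=\iota_n(I)$ as well. Combining the two cases yields $\iota_n\circ\tau_{n-1,n}=t_{n-1,n}\circ\iota_n$. I do not expect a genuine obstacle; the only points requiring care are the identification $t_{n-1,n}=\ttt_{n-1}\in\Sym_{F_{n+1}}$ and the fact that the toggling conditions at the endpoint-neighbour $n-1$ coincide in $A_{n-1}$ and $A_n$ precisely on independent sets not containing $n$.
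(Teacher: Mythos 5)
Your proof is correct and follows essentially the same route as the paper's: split $\III_n$ into $\III_{n-1}$ and $\hat\III_{n-2}$ according to whether $n\in I$, reduce the first case to the inductive hypothesis via $\tau_{n-1,n}(I)=\tau_{n-1,n-1}(I)$ and $\iota_n|_{\III_{n-1}}=\iota_{n-1}$, and observe in the second case that both $\tau_{n-1,n}$ and $t_{n-1,n}=\ttt_{n-1}$ act trivially. You in fact supply slightly more justification than the paper does (e.g.\ why the insertion conditions for the vertex $n-1$ agree in $A_{n-1}$ and $A_n$, and why $\ttt_{n-1}$ fixes indices above $\fib_{n+1}$), which is welcome but not a different argument.
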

\begin{proof}
First consider the case where $\iota_n(I)\leq \fib_{n+1}$.
In this case, we have $I \in \III_{n-1}$.
Hence
we have $\tau_{n-1,n}(I)=\tau_{n-1,n-1}(I)$,
which implies $\iota_n(\tau_{n-1,n}(I)) =\iota_n(\tau_{n-1,n-1}(I))$.
Since $\tau_{n-1,n-1}(I)\in \III_{n-1}$,
we have $\iota_n(\tau_{n-1,n-1}(I))=\iota_{n-1}(\tau_{n-1,n-1}(I))$.
Hence we have  $\iota_{n-1}(\tau_{n-1,n-1}(I))=t_{n-1,n-1})(\iota_{n-1}(I))=t_{n-1,n-1}(\iota_{n}(I))$.
Since $t_{n-1,n}=t_{n-1,n-1}$,
we have
$\iota_n(\tau_{n-1,n}(I)) =t_{n-1,n}(\iota_{n}(I))$.

Next we consider the case where $\fib_{n+1}<\iota_n(I)$.
We show that 
$\iota_n(\tau_{n-1,n}(I))$ equals
$\ttt_{n-1}(\iota_n(I))=\iota_n(I)$.
In this case,
 $I$ contains $n$.
Hence it follows that
 $\tau_{n-1,n}(I)=I$
and that
$\iota_n(\tau_{n-1,n}(I))=\iota_n(I)$. 
\end{proof}

\begin{lemma}
\label{lem:iota:n}
We have
 $\iota_n\circ \tau_{n,n}=t_{n,n}\circ \iota_n$
for $n>2$
\end{lemma}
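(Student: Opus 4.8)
The plan is to verify the identity by a direct case analysis on the independent set $I\in\III_n$, distinguishing the three mutually exclusive and exhaustive cases: (i) $n\in I$; (ii) $n\notin I$ and $n-1\notin I$; and (iii) $n\notin I$ and $n-1\in I$. Recall that $t_{n,n}=\ttt_n$ interchanges $i$ and $i+\fib_{n+1}$ for $1\leq i\leq\fib_n$ and fixes every $j$ with $\fib_n<j\leq\fib_{n+1}$. I will also use, from the construction of $\iota_n$, that $\iota_n$ restricts to $\iota_k$ on $\III_k$, so that the index of a member of $\III_k$ lies in $F_{k+2}=\Set{1,\dots,\fib_{k+2}}$ and, conversely, $\iota_n(I)\leq\fib_{k+2}$ implies $I\in\III_k$, together with the identity $\iota_n(J\cup\Set{n})=\iota_n(J)+\fib_{n+1}$ valid for $J\in\III_{n-2}$.

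In case (i), set $J=\varphi_{n-2}(I)=I\setminus\Set{n}\in\III_{n-2}$. Then $\tau_{n,n}(I)=J$, so $\iota_n(\tau_{n,n}(I))=\iota_n(J)$, while $\iota_n(J)\leq\fib_n$ because $J\in\III_{n-2}$, and $\iota_n(I)=\iota_n(J)+\fib_{n+1}$ by the displayed identity. Hence $\ttt_n(\iota_n(I))=\ttt_n\bigl(\iota_n(J)+\fib_{n+1}\bigr)=\iota_n(J)=\iota_n(\tau_{n,n}(I))$. In case (ii), $I$ is an independent set of $A_n$ avoiding both $n$ and $n-1$, hence $I\in\III_{n-2}$ and $\iota_n(I)\leq\fib_n$; since $n+1$ is not a vertex of $A_n$, the toggling gives $\tau_{n,n}(I)=I\cup\Set{n}$, so $\iota_n(\tau_{n,n}(I))=\iota_n(I)+\fib_{n+1}=\ttt_n(\iota_n(I))$.

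In case (iii), $\tau_{n,n}(I)=I$, so it suffices to check that $\ttt_n$ fixes $\iota_n(I)$. Since $n\notin I$ we have $I\in\III_{n-1}$, hence $\iota_n(I)=\iota_{n-1}(I)\leq\fib_{n+1}$; and since $n-1\in I$, the set $I$ is not an independent set of $A_{n-2}$, so $I\notin\III_{n-2}$, which forces $\iota_n(I)>\fib_n$. Therefore $\fib_n<\iota_n(I)\leq\fib_{n+1}$, a range on which $\ttt_n$ acts as the identity, giving $\ttt_n(\iota_n(I))=\iota_n(I)=\iota_n(\tau_{n,n}(I))$ and exhausting all cases. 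There is no substantive obstacle here: the lemma is a bookkeeping computation with the index function, and the only point needing a little care is case (iii), where one must observe that the indices $j$ with $\fib_n<j\leq\fib_{n+1}$ are exactly those of the independent sets containing the vertex $n-1$, which are frozen by both $\tau_{n,n}$ and $\ttt_n$.
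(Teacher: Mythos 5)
Your proof is correct and follows essentially the same route as the paper: the same three-way case analysis (the paper splits on the range of $\iota_n(I)$ and deduces membership of $n$ and $n-1$ in $I$, while you split on that membership and deduce the range, which is equivalent), using the same key facts about $\iota_n$ and $\ttt_n$.
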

\begin{proof}
First we consider the case where $\iota_n(I)\leq \fib_{n}$.
We show that $\iota_n(\tau_{n,n}(I))$ equals
$\ttt_n(\iota_n(I))=\iota_n(I)+\fib_{n+1}$.
In this case, we have $I\in \III_{n-2}$,
which implies $I$ does not contain $n-1$, $n$.
Hence $\tau_{n,n}(I)=I\cup \Set{n}$.
Since $\iota_n(I\cup \Set{n})=\iota_{n}(I)+\fib_{n+1}$
for $I\in \III_{n-2}$,
it follows that 
$\iota_n(\tau_{n,n}(I))=\iota_n(I)+\fib_{n+1}$.

Next we consider the case where 
$\fib_{n}<\iota_n(I)\leq \fib_{n+1}$.
We show that $\iota_n(\tau_{n,n}(I))$ equals
$\ttt_n(\iota_n(I))=\iota_n(I)$.
In this case, we have $I\in \hat \III_{n-3}$,
which implies $I$ contains $n-1$.
Hence $\tau_{n,n}(I)=I$ and $\iota_n(\tau_{n,n}(I))=\iota_n(I)$.

Finaly we consider the case where 
$\fib_{n+1}<\iota_n(I)\leq \fib_{n+2}$.
We show that $\iota_n(\tau_{n,n}(I))$ equals
$\ttt_n(\iota_n(I))=\iota_n(I)-\fib_{n+1}$.
In this case, we have $I\in \hat\III_{n-2}$,
which implies $I$ contains $n$.
Hence $\tau_{n,n}(I)=I\setminus\Set{n}$.
Since $\iota_n(I\cup \Set{n})=\iota_{n}(I)+\fib_{n+1}$
for $I\in \III_{n-2}$,
it follows that 
$\iota_n(\tau_{n,n}(I))=\iota_n(I)-\fib_{n+1}$.
\end{proof}

Lemmas \ref{lem:iota:n-2}, \ref{lem:iota:n-1} and \ref{lem:iota:n}
imply the following lemma:
\begin{lemma}
If 
$n>2$ and
$\iota_{n-1}\circ \tau_{k,n-1}=t_{k,n-1}\circ \iota_{n-1}$,
then $\iota_n\circ\tau_{k,n}=t_{k,n}\circ \iota_n$.
\end{lemma}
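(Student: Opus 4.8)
The plan is to obtain the statement as a formal consequence of Lemmas \ref{lem:iota:n-2}, \ref{lem:iota:n-1} and \ref{lem:iota:n}, organized as a case analysis on the vertex $k$. Fix $n>2$ and $k$ with $1\le k\le n$, and assume the level-$(n-1)$ intertwining relation $\iota_{n-1}\circ\tau_{k,n-1}=t_{k,n-1}\circ\iota_{n-1}$ whenever this is meaningful, i.e.\ whenever $k\le n-1$. I claim that in each of the three ranges $k<n-1$, $k=n-1$, and $k=n$ one of the preceding lemmas immediately yields $\iota_n\circ\tau_{k,n}=t_{k,n}\circ\iota_n$.

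First, if $k<n-1$, then Lemma \ref{lem:iota:n-2} applies verbatim: its hypothesis is exactly the assumed relation, and its conclusion is the desired identity. Second, if $k=n-1$, then Lemma \ref{lem:iota:n-1} applies, its hypothesis being the assumed relation specialized to $k=n-1$. Third, if $k=n$, then Lemma \ref{lem:iota:n} gives $\iota_n\circ\tau_{n,n}=t_{n,n}\circ\iota_n$ unconditionally, which is the reason no level-$(n-1)$ hypothesis is needed in that case (and indeed $\tau_{n,n-1}$ and $t_{n,n-1}$ are not defined). Since these three ranges exhaust $\{1,\ldots,n\}$, the lemma follows.

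There is essentially no obstacle left at this stage: all the combinatorial content --- the compatibility of the recursive index $\iota_n$ with the recursive permutation $t_{k,n}$ across the decomposition $\III_n=\III_{n-1}\sqcup\hat\III_{n-2}$ --- has already been discharged in the three lemmas cited, and the present statement merely packages them as the induction step. To close the loop, one combines this step with the base cases $n=1,2$ verified in Examples \ref{ex:iotatau:1} and \ref{ex:iotatau:2} and runs an induction on $n$, obtaining $\iota_n\circ\tau_{k,n}=t_{k,n}\circ\iota_n$ for all $n\ge 1$ and all $1\le k\le n$. Thus the bijection $\iota_n\colon\III_n\to F_{n+2}$ conjugates each toggling $\tau_{k,n}$ to $t_{k,n}$, hence conjugates $\Gamma_n=\Braket{\tau_{1,n},\ldots,\tau_{n,n}}$ onto $\Braket{G_n}$, which equals $\Sym_{F_{n+2}}=\Sym_{\fib_{n+2}}$ by Theorem \ref{thm:gen}; this is Theorem \ref{thm:togglegroupissymmetricgroup}.
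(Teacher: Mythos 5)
Your proof is correct and matches the paper exactly: the paper itself derives this lemma by simply combining Lemmas \ref{lem:iota:n-2}, \ref{lem:iota:n-1} and \ref{lem:iota:n} via the same three-way case split on $k$ (namely $k<n-1$, $k=n-1$, $k=n$), with no additional argument. Your added remarks about the base cases and the induction also mirror the paper's subsequent discussion.
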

Therefore, by induction on $n$,
we have 
$\iota_n\circ\tau_{k,n}=t_{k,n}\circ \iota_n$
for $n\geq 1$.

Since $\iota_n$ induces a bijection between $\III_{n}$ and $F_{n+2}$,
the bijective map 
\begin{align*}
 \Set{\tau_{1,n},\ldots,\tau_{n,n}} &\to G_n \\
 \tau_{k,n} &\mapsto t_{k,n}
\end{align*}
induces the homomorphism from $\Gamma_n$ to $\Braket{G_n}$
which preserves the actions.
Since $G_n$ generates the $\fib_{n+2}$-th symmetric group $\Sym_{\fib_{n+2}}$,
the group $\Gamma_n$ is isomorphic to  $\Sym_{\fib_{n+2}}$.
Hence $|\Gamma_n|=|\Sym_{\fib_{n+2}}|=|\Sym_{\III_n}|$.
Since $\Gamma_n\subset\Sym_{\III_n}$,
 we have  Theorem \ref{thm:togglegroupissymmetricgroup}.

\bibliography{by-mr}
\bibliographystyle{amsplain-url} 

\end{document}